\newcommand{\Da}{D}
\newcommand{\Db}{E}
\newcommand{\Dc}{F}
\newcommand{\Za}{X}
\newcommand{\Zb}{Y}
\newcommand{\Zc}{Z}
\newcommand{\Wa}{U}
\newcommand{\Wb}{V}
\newcommand{\Wc}{W}
\newcommand{\OO}{\mathbb O}
\newcommand{\KK}{\mathbb K}
\newcommand{\LL}{\mathbb L}
\newcommand{\so}{\mathfrak{so}}
\newcommand{\su}{\mathfrak{su}}
\newcommand{\ff}{\mathfrak{f}}
\newcommand{\ee}{\mathfrak{e}}
\newcommand{\lie}{\mathcal{L}}
\renewcommand{\cdot}{\circ}
\newcommand{\ad}{\mathrm{ad}\,}
\newtheorem{theorem}{Theorem}
\newtheorem{lemma}{Lemma}
\begin{document}

\title{
An octonionic construction of $E_8$ and the Lie algebra magic square}
\thanks{
This work was supported in part by grants from the John Templeton Foundation
and the Foundational Questions Institute (FQXi).}

\author{Robert A. Wilson \and Tevian Dray \and Corinne A. Manogue}
\address{R. A. Wilson \\
             School of Mathematical Sciences,
Queen Mary University of London,
London E1 4NS, UK.}
  \email{r.a.wilson@qmul.ac.uk}               
\address{           T. Dray \\
              Department of Mathematics,
Oregon State University, Corvallis, Oregon 97331, USA. }
  \email{tevian@math.oregonstate.edu} 
\address{C. A. Manogue \\
Department of Physics,
Oregon State University, Corvallis, Oregon 97331, USA.
}       
\email{corinne@physics.oregonstate.edu}

\date{10th April 2022}

\maketitle
\begin{abstract}
We give a new
construction of the Lie algebra of type $E_8$, 
in terms of $3\times3$ matrices, such that the Lie bracket has
a natural description as the matrix commutator.
This leads to a new interpretation of the Freudenthal--Tits magic square
of Lie algebras, acting on themselves by commutation. 
\end{abstract}

\centerline{\it Dedicated to the memory of Jacques Tits}

\section{Introduction}
\label{intro}
The Chevalley construction of $E_8$, as complex Lie algebra and complex Lie group,
generalizes easily to finite and algebraically closed fields
\cite{Chevalley}. However, over the real
numbers there are three different forms of $E_8$, known as the split, semi-split and
compact real forms. The Chevalley construction naturally produces the split form,
written as $248\times 248$ real matrices. By a straightforward 
change of basis, the other real
forms can be written as $248\times 248$ complex matrices, but writing them as
$248\times 248$ real matrices is more of a challenge. One approach to this problem
is to use the Freudenthal--Tits `magic square'.

According to Vinberg's interpretation \cite{Vinberg}
of the Freudenthal--Tits magic square,
the compact real form of the Lie algebra of type $E_8$ 
can be identified with an algebra of traceless anti-Hermitian $3\times 3$ matrices
over $\KK\otimes\LL$, plus the derivations of $\KK$ and $\LL$,
where $\KK\cong\LL\cong\OO$ is the division algebra of real octonions.
If $\KK\cong \OO$, but $\LL\cong\OO'$ is the algebra of spilt real octonions, then
the semi-split real form of $E_8$ is obtained instead, and if $\KK\cong\LL\cong\OO'$,
the split real form is obtained by the same process. 
However, 
the definition of the Lie bracket given by Vinberg
is not straightforward, and involves some quite complicated and
unintuitive formulae. In one sense, the complications arise because the 
subalgebra of type $D_4+D_4$ is divided up between the diagonal matrices
and the derivations.

Another interpretation, due to Barton and Sudbery \cite{BS}, 
unifies the subalgebra of type $D_4+D_4$
in what they call the triality construction,
but throws away the matrix interpretation. In this paper we combine
the best of the two approaches into a triality-invariant, $3\times 3$
matrix construction of $E_8$.
The key to our constructions is a simple octonionic notation for 
both the split and compact real forms of the Lie algebra of type $D_4$, 
which enables us to put these Lie algebras on the diagonal of $3\times 3$ matrices, leaving
the off-diagonal matrices to be anti-Hermitian as in the Vinberg interpretation. Then 
it turns out, somewhat surprisingly, that the
Lie bracket can be obtained as a natural extension of the usual matrix commutator.

The same ideas give rise to a simple construction of the magic square of
$2\times 2$ matrix algebras (see \cite{BS}), which we discuss first. 
In this case, the real Lie algebras which appear are all of the form $\so(p,q)$,
and the corresponding spin representations can be used to show that we actually construct associative algebras
whose Lie algebras are those which appear in the magic square. This approach is
fully discussed by Dray, Huerta and Kincaid \cite{DHK}.
Since these associative algebras are just algebras of matrices, with linear operators as 
matrix entries,
the Jacobi identity follows automatically.

The $2\times 2$ matrix algebras over $\OO$ and $\OO'$ are isomorphic to the
real Lie algebras $\so(9)$ and $\so(5,4)$ respectively.
Extending to $3\times 3$
matrices is then accomplished by introducing an explicit triality 
automorphism, which itself comes automatically from the exceptional Jordan algebra,
or Albert algebra,
in order to identify three copies of $\so(8)$
or $\so(4,4)$. This identification is in effect a
quotient of the vector space of matrices, which means that the matrices
themselves do not form an algebra (either associative or Lie).
However, 
the quotient space does act on the Albert algebra,
which implies that it forms a Lie algebra of type $\ff_4$ under commutation.
Finally we derive the adjoint representation, in order to facilitate
extending to $\ee_8$.

The most interesting case, of course, is the algebra of $3\times 3$ matrices over
$\mathbb O\otimes \mathbb O$ (resp. $\OO\otimes \OO'$, $\OO'\otimes \OO'$), 
which is expected to give compact (resp. semisplit, split) $\ee_8$.
The algebra we construct below
is certainly generated by two copies of $\ff_4$, and it is fairly easy
to see that the underlying vector space has dimension $248$. 
Since the algebra acts on this vector space, the Jacobi identity holds
automatically. The final piece of the argument,
therefore, is to show that the $248$-dimensional
space is closed under taking commutators.

In this paper, all algebras are real unless otherwise specified,
including octonion algebras and Lie algebras, and Jordan algebras.
Our constructions work equally well
for both compact and split versions of the octonions (and quaternions,
and complex numbers), so give explicit versions of all 
the real forms of $\ee_8$.

In a subsequent paper 
we show how to restrict the
construction of $\ee_8$ to give constructions of the entire magic square,
first in the adjoint representations.
This is not as straightforward as in the case of $2\times 2$ matrices,
where the spin representations permit direct construction of each
entry in the magic square.
In particular, the constructions of $\ee_6$ and $\ee_7$ exhibit interesting features,
and permit construction not only of the adjoint representations, but also
of the minimal representations.

The paper is organised as follows. In Section~\ref{octo}
we recall necessary background material on octonions and $\so(8)$,
including triality. In Section~\ref{so9} we analyse the $2\times 2$
octonion matrix algebra, and the related Lie algebra $\so(9)$.
This material is then extended in two independent ways. First, in Section~\ref{so16},
we extend to the tensor product of two octonion algebras, in order to
construct $\so(16)$. Second, in Section~\ref{f4}, we extend to
$3\times 3$ matrices, in order to construct the Albert algebra and the
related Lie algebra $\ff_4$. Finally, in Section~\ref{e8}, we make both
extensions at the same time, to construct $\ee_8$.

\section{Octonions and $\so(8)$}
\label{octo}
\subsection{Octonions and split octonions}
The (real) 
octonions are an $8$-dimensional division algebra $\mathbb O$, with (real) basis
$1$, $i$, $j$, $k$, $\ell$, $i\ell$, $j\ell$, $k\ell$, with the property that all the $7$ imaginary basis units
square to $-1$, any two associate but anti-commute, $ij=-ji=k$, and 
any three whose product is not $\pm1$
anti-associate,
in the sense that, for example, $(ij)\ell=-i(j\ell)$. From this the entire multiplication table can be
computed. There is an involution $x\mapsto \overline{x}$ which fixes $1$
and negates all the other basic units, and a (squared) norm $N(x)=x\overline{x}$.
There is also a $7$-fold symmetry which cycles $(i,j,\ell,k,j\ell,-k\ell,i\ell)$,
and a $3$-fold symmetry $(i,j,k)(i\ell,j\ell,k\ell)$. Moreover, if $i,j,k$ are fixed,
then any of $i\ell,j\ell,k\ell$ can be used in place of $\ell$.

The split octonions $\mathbb O'$ are similar, except that four of the imaginary units
square to $+1$ instead of $-1$. For example, we may take a basis conisting of
$1,I,J,K,L,IL,JL,KL$,
and decree that $L,IL,JL,KL$ square to $+1$. We lose the $7$-fold symmetry, of course,
but keep the $3$-fold symmetry.
It is also still true that any of $IL$, $JL$, or $KL$ can be used in place of $L$.

In what follows, we shall notate two copies of the octonions by lower-case
and upper-case letters, and usually (but not always) 
assume that $i,j,k,I,J,K$ square to $-1$,
and explicitly state $\ell^2=1$ or $\ell^2=-1$, and $L^2=1$ or $L^2=-1$, in all
cases where it actually matters. 
In general, we prove results only for the division algebra $\OO$, and note that the
same proof works for $\OO'$, with a few signs changed.

Many results do not depend on the actual basis, and
will therefore be stated for more general $p,q,r,s$, which will usually denote arbitrary
mutually orthogonal
pure imaginary octonions. However, in most cases there is
no loss of generality in assuming that
$p,q,r,s$ belong to the standard basis. Similarly, $a,b,c$ will often denote arbitrary
octonions, not assumed imaginary, and not assumed to have norm $\pm1$.

\subsection{The $\so(8)$ Lie algebra}
The (compact, real) $\so(8)$ Lie algebra may be generated by the left actions of the octonions
on themselves. As a vector space this algebra is then spanned by the $7$ 
single left multiplications $L_i$, \ldots $L_{k\ell}$ and the $21$ `nested' multiplications 
$L_i\circ L_j$
(meaning $L_j$ followed by $L_i$), \ldots, $L_{j\ell}\circ L_{k\ell}$.
In the associative algebra generated by these operations,  $L_p\circ L_q=-L_q\circ L_p$,
and $L_p\circ L_p=L_{p^2}$, and
we easily deduce
the following rules,
where $p,q,r,s$ are (arbitrary) orthogonal imaginary octonions:
\begin{eqnarray}
L_p\circ (L_p\circ L_q) &=& - (L_p\circ L_q)\circ L_p\cr &=& p^2 L_q\cr
(L_r\circ L_p)\circ (L_p\circ L_q) &=& - (L_p\circ L_q)\circ (L_r\circ L_p)\cr 
&=& p^2 (L_r\circ L_q)\cr
L_r\circ (L_p\circ L_q)&=& (L_p\circ L_q)\circ L_r\cr
(L_r\circ L_s)\circ (L_p\circ L_q) &=& (L_p\circ L_q)\circ (L_r\circ L_s)
\end{eqnarray}
Where it does not cause confusion, we shall write $fg$ for $f\circ g$ and
$[f,g]$ for $fg-gf$.
Hence in the Lie algebra we have 
\begin{eqnarray}
[L_p,L_q]&=&2L_pL_q\cr
[L_p,L_p L_q]&=& 2p^2 L_q\cr
[L_r L_p,L_p L_q]&=& 2p^2 L_rL_q\cr
[L_r,L_pL_q]&=&0\cr
[L_p L_q,L_rL_s]&=&0.
\end{eqnarray}
In the compact case, where $p^2=-1$ for all imaginary elements of norm $1$, 
the map taking $L_p$ to $1\wedge p$, and $L_{p}L_{q}$ to $p\wedge q$, for $p,q$
imaginary elements of the standard basis,
shows the isomorphism with the standard construction of (the
compact real form of) $\so(8)$. 
Here the Lie bracket is given by
$$[p\wedge q,p\wedge r]=2q\wedge r,$$
where we now also allow the variables $p,q,r$ to be real.
If we use instead the split octonions,
we obtain a different real form, namely the split real form
$\so(4,4)$.

\subsection{Right actions}
For any $a\in\OO$, define left, right, and double actions of $a$ on $\OO$ by
\begin{eqnarray}
L_a&:& x\mapsto ax\cr
R_a&:& x\mapsto xa\cr
B_a:=-L_a-R_a&:& x\mapsto -ax -xa
\end{eqnarray}
As is well-known, the Lie algebra $\so(8)$ is generated by any one
of the left, right, or double actions of the imaginary part of
$\OO$ on $\OO$. 
To see this explicitly, we may restrict to the standard basis of $\OO$, 
and then by symmetry it
suffices to verify that
\begin{eqnarray}
L_\ell&=&(-R_\ell+R_iR_{i\ell}+R_jR_{j\ell}+R_kR_{k\ell})/2
\end{eqnarray}
This is an easy exercise.
(Here we use the natural convention for functions, 
that $R_iR_j$ means $R_j$ followed by $R_i$, rather than
the natural convention for right multiplications, that $R_iR_j$
means $R_i$ followed by $R_j$.)
To obtain the full set of such equations, apply the $7$-cycle  $(i,j,\ell,k,j\ell,-k\ell,i\ell)$ to the indices.
For convenience, we record the following useful identities.
\begin{eqnarray}
L_iL_{i\ell}&=&(-R_\ell+R_iR_{i\ell}-R_jR_{j\ell}-R_kR_{k\ell})/2\cr
L_i&=&(-R_i+R_{i\ell}R_\ell+R_kR_j+R_{j\ell}R_{k\ell})/2
\end{eqnarray}
Triality cycles $L_p$ to $R_p$ to $B_p$, while duality swaps $-L_p$ with $R_p$, and therefore
negates $B_p$. Hence we obtain corresponding formulae expressing each of the operators
of type $L$, $R$ or $B$ in terms of either of the other two types.

Analogous formulae hold in the split octonions, sometimes with different signs. The rule
for signs is that whenever $\ell$ appears twice in a single term, the corresponding term is negated 
in the split octonions relative
to the compact octonions. For example, in the split octonions
\begin{eqnarray}
L_I&=&(-R_I-R_{IL}R_L+R_KR_J-R_{JL}R_{KL})/2
\end{eqnarray}

\section{The Lie algebra $\so(9)=\su(2,\OO)$}
\label{so9}
\subsection{The spin representation}
We are now ready to start looking at the $n=2$ magic square.
We start the investigation with the $\so(9)$ case, that is the top-right
or bottom-left corner, as that's
analogous to $\ff_4$ in the $n=3$ square. 
This algebra is already well-understood as an algebra of $2\times2$ octonion matrices
(see  Manogue and Schray \cite{ManSchray}), and what we give here is
just a new notation, and a slightly different interpretation.

Our approach is 
to start with the
off-diagonal matrices, and use them to construct the rest of the
algebra. We simply take them
to be anti-Hermitian matrices in the usual sense, spanned by
\begin{eqnarray}
\Za_1:=\begin{pmatrix}0&1\cr-1&0\end{pmatrix} &\qquad&\Za_p:=\begin{pmatrix}0&p\cr p&0\end{pmatrix}
\end{eqnarray}
where $p$ can be any pure imaginary octonion.
We let these matrices act on $2$-component columns (of octonions) 
in the obvious way. Since these actions, regarded as maps on real $16$-space,
are linear, they generate an associative algebra.
It follows that under commutation they generate a Lie algebra. 

On the other hand, the octonions are not associative, so the commutator of
two actions, each expressed as a $2\times 2$ octonionic
matrix, \emph{cannot} in general be expressed
as the action of the commutator of the matrices. This forces us to incorporate into
the matrices a notation for a commutator of left actions of octonions.
Let us write $p\circ q=[L_p,L_q]/2$ 
for this kind of
`product' of $p$ and $q$. 
With this definition, we obtain the following diagonal matrices, considered as left actions
on octonionic (column) $2$-vectors, where $p$ and $q$ are orthogonal imaginary octonions:
\begin{eqnarray}
\Da_p=\Da_{1,p}=[\Za_1,\Za_p]/2&=&\begin{pmatrix}p&0\cr 0&-p\end{pmatrix}\cr
\Da_{p,q}=-\Da_{q,p}=[\Za_{p},\Za_{q}]/2&=&\begin{pmatrix}{p\circ q}&0\cr 0&
{p\circ q}\end{pmatrix}
\end{eqnarray}

It is now clear that the matrices $\Da_p$ and $\Da_{p,q}$,
acting on $2$-component column vectors,  
satisfy the same commutation relations as the
operators $L_p$ and $L_pL_q$,  
acting on octonions. In particular they
are closed under commutation.
Therefore the matrices $D_p$ and $D_{p,q}$ give a representation of $\so(8)$. 
The first octonion coordinate gives the half-spin representation defined by left
octonion multiplication. The second octonion coordinate is equivalent to the
other half-spin representation, which is defined here by left-multiplication
by the octonion conjugate, but which can equivalently be defined by right-multiplication
by the octonions themselves.
\subsection{The Lie algebra}
We claim that this $\so(8)$ together with
$\Za_1$ and the $\Za_p$, for $p$ any imaginary basic unit, generate a copy of
$\so(9)$. 
In order to prove this fact, 
it is necessary first to check closure under commutation. 
Since the commutators of two $\Za$s or two $\Da$s have already been considered,
all that remains 
is to check the commutators of the actions of the $\Za_a$ with the actions of $\Da_p$
and $\Da_{p,q}$.
This is almost a triviality: we compute the following
\begin{eqnarray}
[\Za_1,\Da_p]&=&
-2\Za_p\cr
[\Za_p,\Da_p]&=&
-2p^2\Za_1
\cr
[\Za_q,\Da_p]&=&
0\cr
[\Za_1,\Da_{p,q}]&=&
0\cr
[\Za_p,\Da_{p,q}]&=&
2p^2\Za_q\cr
[\Za_{r},\Da_{p,q}]&=&
0
\end{eqnarray}
Notice that the first three rules can be expressed in a single equation as
\begin{eqnarray}
[\Da_p,\Za_a]&=&\Za_{ap}+\Za_{pa}
\end{eqnarray}
which will be useful in later calculations.
Moreover, the last three of the six rules
follow immediately from the first three together with
the Jacobi identity for matrix actions, thus: 
\begin{eqnarray}
[\Za_1,\Da_{p,q}] &=&[\Za_1,[\Za_p,\Za_q]]/2\cr
&=&[\Za_p,[\Za_1,\Za_q]]/2 - [\Za_q,[\Za_1,\Za_p]]/2\cr
&=&[\Za_p,\Da_q]/2-[\Za_q,\Da_p]/2\cr
&=&0\cr
[\Za_p,\Da_{p,q}]&=&[\Za_p,[\Da_p,\Da_q]]/2\cr
&=&-[\Da_p,[\Da_q,\Za_p]]/2 - [\Da_q,[\Za_p,\Da_p]]/2\cr
&=&p^2[\Da_q,\Za_1]\cr
&=&2p^2\Za_q\cr
2[\Za_r,\Da_{p,q}]&=&[\Za_r,[\Da_p,\Da_q]]\cr
&=&-[\Da_p,[\Da_q,\Za_r]] - [\Da_q,[\Za_r,\Da_p]]\cr
&=&0
\end{eqnarray}

\subsection{The adjoint representation}
By this stage we know we have a Lie algebra of dimension $36$, containing
$\so(8)$. It is easy to see that it is simple, and that it is a copy of $\so(9)$.
More explicitly, an isomorphism with the standard copy of $\so(9)$ in its adjoint
representation can be obtained as follows.
If we label the $9$ coordinates for $\so(9)$ as $1',1,i,j,k,\ell,i\ell,j\ell,k\ell$ then the
isomorphism with the usual Lie algebra, namely the skew square of the
natural representation, is given by
\begin{eqnarray}
\Za_{a}&\mapsto&1'\wedge a\cr
\Da_p&\mapsto&1\wedge p\cr
\Da_{p,q}&\mapsto&p\wedge q
\end{eqnarray}
where $a,p,q$ are elements of the standard basis of $\OO$, and $p,q$ are imaginary.
Here the Lie bracket is given by
$$[a\wedge b,a\wedge c]= 2[b\wedge c]$$
and other products are zero. 

Strictly speaking, the adjoint representaton $\ad$ is defined by 
$$\ad x : y \mapsto [x,y],$$
for every $x,y$ in the algebra.
Thus $\ad x$ is a linear map on the underlying vector space of the algebra.
This somewhat pedantic distinction between the algebra itself and its adjoint representation
becomes helpful later on when we come to consider the $\ff_4$ algebra and
its representation  on the Albert algebra, and even more so when we come
to build $\ee_8$.

The representation of the algebra on $2$-component octonion
columns 
is of course
the spin representation of $\so(9)$, of real dimension $16$.

\subsection{The natural representation}
\label{natso9}
We show how the natural representation of $\so(9)$ can also be constructed
using $2\times 2$ octonion matrices. This will be required
in the construction of $\ff_4$ below, in its action on the Albert algebra.

Let $\mathcal U$ be the space of
$2\times 2$ trace $0$ Hermitian matrices, spanned by 
\begin{eqnarray}
\Wa_{1'}=\begin{pmatrix}1&0\cr 0&-1\end{pmatrix}& \mbox{and}& 
\Wa_a=\begin{pmatrix}0&a\cr\overline{a}&0\end{pmatrix} \mbox{ for } a=1,i,j,k,
\ell,i\ell,j\ell,k\ell.
\end{eqnarray}
We define the action of the $\Za_b$ on $\mathcal U$ via
\begin{eqnarray}
\Za_b&:&\Wa_a\mapsto \Za_b\Wa_a-\Wa_a\Za_b
\end{eqnarray}
and check that $\Za_b$ maps $\Wa_{1'}$ to $-2\Wa_b$, 
and $\Wa_b$ to $2\Wa_{1'}$, while sending
all other basis elements $\Wa_a$ to $0$. Since this is (up to an overall scalar factor) 
the natural action of the $\Za_b$ on a $9$-dimensional
orthogonal space, it extends to a representation of the whole of $\so(9)$, provided we
define the actions of the commutators to be the commutators of the actions.

In the case $\Da_p=[\Za_1,\Za_p]/2$ there is enough associativity to ensure that the
action of $\Da_p$ is the same as the obvious action of the matrix
$\begin{pmatrix}p&0\cr0&-p\end{pmatrix}$. In the case $\Da_{p,q}$ we compute the
action as $\Wa_p\mapsto \Wa_q\mapsto-\Wa_p$, and every other $\Wa_a$ maps to $0$.
Now let us compare this with the action of a hypothetical `matrix'
$\begin{pmatrix}p\circ q&0\cr 0&p\circ q\end{pmatrix}$. To obtain the correct action
on $\mathcal U$, it is necessary and sufficient to interpret the commutators as
\begin{eqnarray}
[p\circ q,a] &=&p\circ q\circ a-a\circ p\circ q 
\end{eqnarray} 
since this evaluates to $0$ except when $a=p$ or $a=q$, when it evaluates to
$2q$ and $-2p$ respectively. (If $a=1$ we interpret $p\circ q\circ a=a\circ p \circ q=p\circ q$.)

\subsection{Matrix commutators}
\label{so9mat}
When working with matrices over an associative algebra, the
Lie bracket automatically
agrees with the commutator of matrices. Over octonions, one cannot
expect this. However, it is remarkable how close we can get to this aim in this case.
 The commutators of the $\Za$s were used to define the $\Da$s.
The commutators of the $\Da$s have already been shown to follow the same rules
as the $L$s. These rules explain how to interpret commutators of
`nested' octonions.

It remains to consider 
the commutators of the $\Za$s with the $\Da$s. These contain off-diagonal entries which
are either of the form $ap+pa$ (in the case of $[\Za_a,\Da_p]$), or of the form
$a(p\circ q)-(p\circ q) a$ (in the case of $[\Za_a,\Da_{p,q}]$).
The former gives the correct commutator as a matrix if the products are just multiplied out
as octonions. The latter also gives the correct commutator provided the given entry
is evaluated as $[a,p\circ q]$ using the same rules as for the diagonal entries.
However, it does not make sense to try to evaluate the individual terms $a(p\circ q)$
or $(p\circ q)a$ separately.

This interpretation now gives an \emph{abstract} construction of the Lie algebra,
without reference to its action on column vectors. It consists of anti-Hermitian matrices,
in the extended sense which includes `nested' diagonal matrices, and the Lie bracket
is just the matrix commutator, extended by a more-or-less obvious interpretation
of commutators involving nested octonions. Since this result is by no means obvious,
it is perhaps worth stating formally.

\begin{theorem}\label{so9thm}
Let $\lie$ be the real $36$-space spanned by  the $2\times 2$ octonionic matrices
$$\Za_a=\begin{pmatrix}0&a\cr-\overline{a}&0\end{pmatrix},
\Da_p=\begin{pmatrix}p&0\cr0&-p\end{pmatrix},
\Da_{p,q}=\begin{pmatrix}p\circ q&0\cr 0&p\circ q\end{pmatrix}
$$
where $p,q$ are any distinct imaginary basic units, and $a$ is any basic octonionic unit.
Define a commutator on $\lie$ as the matrix commutator, subject to 
evaluating all entries as left-multiplications. This is equivalent to
the
following special rules:
\begin{enumerate}
\item Diagonal entries are evaluated as normal
except that $pq-qp$ is evaluated as $2p\circ q$, so that $[\Za_p,\Za_q]=2\Da_{p,q}$.
\item Off-diagonal entries $ab+ba$ are evaluated as normal octonions, so
are $0$ on the basis unless $a=1$ or $b=1$ or $a=b$.
\item Off-diagonal entries $[p\circ q,r]$ are evaluated as $p\circ q\circ r-r\circ p\circ q$,
so are $0$ on the basis unless $r=p$ or $r=q$.
\end{enumerate}
Then $\lie$ with this commutator map is isomorphic to the Lie algebra $\so(9)$, if the
octonion division algebra is used, or to $\so(5,4)$, if the split octonions are used. 
\end{theorem}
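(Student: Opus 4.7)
My plan is to reduce the theorem to the concrete spin representation already analysed in the preceding subsections. I will observe that the $36$ basis elements $\Za_a$, $\Da_p$, $\Da_{p,q}$ act as $\RR$-linear operators on the real $16$-dimensional space of two-component octonion columns; their commutators in the associative enveloping algebra of these operators automatically satisfy the Jacobi identity, giving a Lie algebra structure $\lie^{\mathrm{op}}$ on $\lie$ whose closure under the bracket has already been established case-by-case in the excerpt. The substantive content of the theorem is then the claim that this bracket agrees with the formal matrix commutator, provided nested entries are interpreted according to rules (1)--(3); verifying this correspondence is the main task.

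To identify $\lie^{\mathrm{op}}$ explicitly with $\so(9)$ (or $\so(5,4)$ in the split case), I will use the wedge-product assignment $\Za_a\mapsto 1'\wedge a$, $\Da_p\mapsto 1\wedge p$, $\Da_{p,q}\mapsto p\wedge q$. The $28$ elements $\Da_p, \Da_{p,q}$ already realise $\so(8)$ (or $\so(4,4)$) by the analysis of Section~\ref{octo}, and comparing the explicit commutators $[\Za_1,\Da_p]=-2\Za_p$, $[\Za_p,\Da_p]=-2p^2\Za_1$, $[\Za_p,\Da_{p,q}]=2p^2\Za_q$ against the target relation $[a\wedge b,a\wedge c]=2\,b\wedge c$ will show the map is a Lie algebra homomorphism. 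Linear independence of the $36$ spanning operators, visible from their distinct actions on the nine-dimensional space $\mathcal{U}$ of Section~\ref{natso9}, will make it an isomorphism.

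The heart of the proof is to check that rules (1)--(3) correctly encode the commutator of $\lie^{\mathrm{op}}$. Rule (1) is immediate from the fact that $[\Za_p,\Za_q]$ acts diagonally as $2 L_p L_q = 2\,p\circ q$, matching $2\Da_{p,q}$ by definition. For rule (2), one expands $\Za_a\Da_p-\Da_p\Za_a$ formally and reads off off-diagonal entries such as $-(ap+pa)$; since $ap+pa$ on the standard basis always reduces to a real scalar multiple of a single basis unit, no nontrivial appeal to octonion associativity is needed to see the result is $-\Za_{ap+pa}$. Rule (3) is the delicate case and the principal obstacle: the formal entry $a(p\circ q)-(p\circ q)a$ is only meaningful when read as an operator commutator $L_a L_p L_q - L_p L_q L_a$, since $p\circ q$ is a nested composition rather than an octonion, and the naive substitution $p\circ q\mapsto pq$ would fail associativity. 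Using the identities displayed in Section~\ref{octo}, this operator commutator collapses to $0$ unless $a\in\{p,q\}$, in which case it produces precisely the values required to match the computed $[\Za_a,\Da_{p,q}]$. Once all three rules are checked on basis pairs, the formal matrix commutator coincides with the operator commutator of $\lie^{\mathrm{op}}$, so $\lie$ equipped with this bracket is a Lie algebra isomorphic to $\so(9)$ in the compact case and $\so(5,4)$ in the split case, which completes the proof.
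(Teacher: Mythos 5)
Your proposal matches the paper's own (distributed) argument: work first with the operators on octonionic $2$-columns so that closure and the Jacobi identity come from the associative algebra of linear maps, identify the resulting $36$-dimensional algebra with $\so(9)$ (resp.\ $\so(5,4)$) via the wedge assignment $\Za_a\mapsto 1'\wedge a$, $\Da_p\mapsto 1\wedge p$, $\Da_{p,q}\mapsto p\wedge q$, and then check \emph{post hoc} on basis pairs that the formal matrix commutator with rules (1)--(3) reproduces the operator brackets, the only delicate point being the nested entries $[p\circ q,a]$. This is essentially the same proof as in the paper, so no further comment is needed.
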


\section{The 
Lie algebra $\so(16)=\su(2,\OO\otimes\OO)$}
\label{so16}
\subsection{The spin representation}
The same idea enables us to construct $\so(16)$ by taking matrices defined over
the tensor product of two copies of the octonions.
The hard work, such as it was, has already been done.
Let $\KK$ and $\LL$ be two copies of the octonions.
This time, we let the algebra act on two-component vectors with entries
in $\KK\otimes \LL$. This vector space has real dimension $128$, and
affords the spin representation of $\so(16)$.
(See \cite{DHK}.)

The algebra may be generated by two copies of $\so(9)$, that is $\su(2,\mathbb K)$
and $\su(2,\mathbb L)$ as constructed in Section~\ref{so9}.
These each act in the obvious way on one component of the tensor products, and intersect in
a copy of $\so(2)$ spanned by $\Za_1$. The diagonal subalgebras
$\so(\mathbb K)\cong\so(8)$ and $\so(\mathbb L)\cong\so(8)$ commute with each other,
by construction.

We already have $15$ dimensions of off-diagonal matrices, and there are $49$ more
of type 
\begin{eqnarray}
\Za_{pP}:=
[\Da_p,\Za_P]/2&=&\begin{pmatrix}0&pP\cr -pP & 0\end{pmatrix},
\end{eqnarray}
making $64$ in total.
This makes up the full $28+28+64=120$ dimensions of $\so(16)$.
It is straightforward to show that this $120$-space is closed under commutation, and is a
compact simple Lie algebra containing $\so(8)+\so(8)$, so is $\so(16)$.
However, we shall give a more explicit isomorphism with the standard copy of
$\so(16)$ in Section~\ref{Lieso16} below.

\subsection{The Lie bracket}
\label{Lieso16}
It is easy to see that the commutators of the $\Da$s and $\Za$s are given by the same formulae
as before, with the extra subscript on the $\Za$s being carried through. 
More explicitly, we have
\begin{eqnarray}
[\Da_p,\Za_{aA}] &=& \Za_{apA}+\Za_{paA}\cr
[\Da_{p,q},\Za_{qA}] &=& 2q^2\Za_{pA}\cr
[\Da_{p,q},\Za_{aA}] &=& 0\mbox{ for } a\perp p,q
\end{eqnarray}
and the corresponding equations with the roles of the two alphabets interchanged.

It remains therefore to consider the commutators of two $\Za$s.
(As usual matrices should be understood to act on $2$-component
columns.) Assume that $a,b$ are orthogonal basic units in $\KK$ and $A,B$ are orthogonal
basic units in $\LL$. Then
\begin{eqnarray*}
[\Za_{aA},\Za_{bB}]&=&\begin{pmatrix}0&a A\cr -\overline{a}\overline{A}&0\end{pmatrix}
\begin{pmatrix}0&b B\cr -\overline{b}\overline{B}&0\end{pmatrix}
-
\begin{pmatrix}0&b B\cr -\overline{b}\overline{B}&0\end{pmatrix}
\begin{pmatrix}0&a A\cr -\overline{a}\overline{A}&0\end{pmatrix}
\end{eqnarray*}
which evaluates to
\begin{eqnarray*}
&&\begin{pmatrix}-(a\cdot\overline{b})(A\cdot\overline{B})+(b\cdot\overline{a})(B\cdot\overline{A})&0\cr
0&-(\overline{a}\cdot b)(\overline{A}\cdot B)+(\overline{b}\cdot a)(\overline{B}\cdot A)\end{pmatrix}
\end{eqnarray*}
We may assume that $a,b,A,B$ belong to the standard basis, and that
either $a\ne b$ or $A\ne B$. The arguments in the two cases are the same,
so without loss we may assume $A\ne B$.
First disjoin cases for $a$ and $b$.
In the case $a=b$, so that $a\cdot\overline{b}=b\cdot\overline{a}=\pm1$,
the top-left entry reduces to
$a\overline{a}(B\cdot\overline{A}-A\cdot\overline{B})$, while if $a\ne b$ it reduces to
$(b\cdot\overline{a})(B\cdot\overline{A}+A\cdot\overline{B})$.
Now disjoin cases for $A$ and $B$.
In the case $A=1$ and $B$ is imaginary, the former reduces to $2a\overline{a}B$ 
and the latter to $0$.
Similarly, if both $A$ and $B$ are imaginary, then the former reduces to $2a\overline{a}A\cdot B$,
and the latter to $0$ again. 
We find that the commutators are zero except for
\begin{eqnarray}
[\Za_{aA},\Za_{aB}]&=&2a\overline{a}\Da_{A,B}\cr
[\Za_{aA},\Za_{bA}]&=&2A\overline{A}\Da_{a,b}
\end{eqnarray}

To see the isomorphism with $\so(16)$ a little more explicitly, we may imagine $16$ 
coordinates labelled $1,i,\ldots,k\ell,1',I,\ldots,KL$ and identify
\begin{eqnarray}
\Da_i&=&1\wedge i\cr
\Da_{i,j}&=& i\wedge j\cr
\Da_{I}&=& 1'\wedge I\cr
\Da_{I,J}&=& I\wedge J\cr
\Za_{aA}&=& a\wedge A
\end{eqnarray} 
Then it is easy to check that all the non-zero commutator relations are
of the form $$[x\wedge y, x\wedge z]=2y\wedge z.$$

\subsection{Matrix commutators}
Again, the above computations are all done with actions on $2$-component column vectors.
However, just as in Section~\ref{so9}
above, we observe \emph{post hoc} that matrix commutators,
computed with the same rules as before, give the same answers. In other words,
we may define an abstract Lie algebra of type $\so(16)$, spanned by these matrices,
and with the Lie bracket defined by the matrix commutator, suitably interpreted.

Indeed, the only new commutators we need to consider are the $[\Za_{aA},\Za_{bB}]$.
But the computation above works exactly the same whether we work with actions,
or with abstract matrices (i.e. multiplying out $a\cdot b$ and $A\cdot B$ as octonion products
wherever they arise). 
The reason for this is that the only rules we are using are commutation rules between
octonion actions, and these are the same as the corresponding rules for
octonion multiplications: essentially just that $L_p\circ L_q=-L_q\circ L_p$ corresponds to
$pq=-qp$.
Hence no new rules of matrix multiplication are required.

For completeness we write out this result as a formal theorem.

\begin{theorem}\label{so16thm}
Let $\lie$ be the real $120$-space spanned by  the following $2\times 2$  matrices
over $\KK\otimes\LL$, where $\KK$ and $\LL$ are octonion algebras:
\begin{eqnarray}
\Za_{aA}=\begin{pmatrix}0&aA\cr-\overline{a}\overline{A}&0\end{pmatrix},
&\Da_p=\begin{pmatrix}p&0\cr0&-p\end{pmatrix},&
\Da_{p,q}=\begin{pmatrix}p\circ q&0\cr 0&p\circ q\end{pmatrix}
\cr
&\Da_P=\begin{pmatrix}P&0\cr0&-P\end{pmatrix},&
\Da_{P,Q}=\begin{pmatrix}P\circ Q&0\cr 0&P\circ Q\end{pmatrix}
\end{eqnarray}
where $p,q$ are any distinct imaginary basic units of $\KK$, and $a$ is any basic unit,
and similarly for $P,Q,A\in\LL$.
Define a commutator on $\lie$ as the matrix commutator, subject to 
evaluating all entries as left-multiplications.
Then $\lie$ with this commutator map is isomorphic to the Lie algebra $\so(16)$, if 
both $\KK$ and $\LL$ are division algebras, or to $\so(8,8)$ if both are split octonion algebras,
 or to $\so(12,4)$, 
if they are one of each type.
\end{theorem}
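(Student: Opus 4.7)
The plan is to assemble two instances of Theorem~\ref{so9thm} with the explicit cross-commutator calculation already carried out in Section~\ref{Lieso16}. First, I would invoke Theorem~\ref{so9thm} twice to identify two $36$-dimensional Lie subalgebras of $\lie$: the ``$\KK$-copy'' generated by $\Za_a,\Da_p,\Da_{p,q}$ with $a,p,q\in\KK$, and the ``$\LL$-copy'' generated by $\Za_A,\Da_P,\Da_{P,Q}$. Each is isomorphic to $\so(9)$ or its appropriate split form. These subalgebras meet only in the line spanned by $\Za_1$, and their diagonal $\so(8)$ pieces commute with one another because $\Da_p$ acts trivially on the $\LL$-tensor factor and vice versa. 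The mixed off-diagonal generators $\Za_{pP}=[\Da_p,\Za_P]/2$, with $p,P$ ranging over imaginary basis units of $\KK,\LL$, contribute $49$ further dimensions, giving $36+36-1+49=120=\dim\so(16)$ in total.

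Next, I would verify closure under the commutator. Brackets internal to either $\so(9)$ are handled by Theorem~\ref{so9thm}; brackets between the two diagonal $\so(8)$'s vanish; and brackets $[\Da_p,\Za_{aA}]$ and $[\Da_{p,q},\Za_{aA}]$ reduce to the $\so(9)$ formulas of Theorem~\ref{so9thm} by treating the $\LL$-factor as an inert scalar (and symmetrically with $\KK$ and $\LL$ interchanged). The only genuinely new bracket is $[\Za_{aA},\Za_{bB}]$; the explicit matrix computation of Section~\ref{Lieso16} evaluates this to $2a\overline{a}\Da_{A,B}$ when $a=b$ (and $A\ne B$), to $2A\overline{A}\Da_{a,b}$ when $A=B$ (and $a\ne b$), and to zero otherwise. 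In every case the result lies in $\lie$, establishing closure.

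Finally, I would identify $\lie$ with $\so(16)$ via the wedge-product map $\Da_p\mapsto 1\wedge p$, $\Da_{p,q}\mapsto p\wedge q$, $\Da_P\mapsto 1'\wedge P$, $\Da_{P,Q}\mapsto P\wedge Q$, $\Za_{aA}\mapsto a\wedge A$. Each commutator formula derived in the previous step is an instance of the single standard rule $[x\wedge y,x\wedge z]=2y\wedge z$, with all other brackets of basis bivectors zero; hence the map is a Lie isomorphism onto the $\so$ of a $16$-dimensional orthogonal space, whose signature is read off from the signs of $a\overline{a}$ and $A\overline{A}$. Compact $\KK$ and $\LL$ give $\so(16)$; one of each type flips four of the sixteen signs, giving $\so(12,4)$; two split copies flip eight, giving $\so(8,8)$. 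The matrix-commutator clause requires no new argument, because in $[\Za_{aA},\Za_{bB}]$ the only octonion products that appear are ordinary products $a\overline{b}$ and $A\overline{B}$, while nested $p\circ q$ terms only arise in contexts already governed by the interpretation rules of Theorem~\ref{so9thm}.

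The main obstacle, foreshadowed by the paper's own narrative, is the $[\Za_{aA},\Za_{bB}]$ identity: because neither octonion algebra is associative, it is not a priori obvious that the off-diagonal entries of the naive matrix commutator vanish, nor that the diagonal entries collapse into the nested-octonion shape $\Da_{A,B}$ or $\Da_{a,b}$. Once that case analysis (on whether $a=b$ and whether $A=B$) is carried through, everything else—the two copies of $\so(9)$, the dimension count, the wedge identification, and the signature computation—reduces to assembly and bookkeeping.
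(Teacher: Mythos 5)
Your proposal is correct and follows essentially the same route as the paper: two copies of $\so(9)$ from Theorem~\ref{so9thm} meeting in the span of $\Za_1$ with commuting diagonal $\so(8)$'s, the $28+28+64=120$ dimension count, the case analysis of $[\Za_{aA},\Za_{bB}]$ yielding $2a\overline{a}\Da_{A,B}$, $2A\overline{A}\Da_{a,b}$ or zero, the wedge-map identification with $[x\wedge y,x\wedge z]=2y\wedge z$, and the observation that the matrix commutator needs no new rules since only ordinary octonion products arise in the new bracket. Your signature bookkeeping for the split cases is in fact slightly more explicit than the paper's, but the argument is the same.
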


\section{The Lie algebra $\ff_4=\su(3,\OO)$ 
}
\label{f4}
\subsection{Introduction}
The Lie algebra $\ff_4$ may be defined abstractly as the algebra of derivations 
on the exceptional Jordan algebra, 
or Albert algebra, that is an
algebra of $3\times 3$ Hermitian matrices over $\OO$.
However, this does not lead easily to a nice description of $\ff_4$, as there is no
obvious canonical parametrisation of the derivations. In effect, each derivation
has many names. 

A construction of the Lie \emph{group}
$F_4$, acting as automorphisms of the Albert algebra, 
 is described by Dray and Manogue \cite{DrayMan}, in terms of $3\times 3$
anti-Hermitian octonion matrices.
Indeed, generators for this group are already given by Jacobson in 1960 \cite{Jac2}.
Naively, the space of anti-Hermitian matrices has dimension $45$, and there are
significant technical difficulties in turning this into a workable description
of $52$-dimensional $F_4$. In particular, the elements of the group generated by the actions of the 
diagonal matrices do not have an obvious
canonical notation.
In a sense, it is straightforward to adapt these constructions
to construct the corresponding action of the
Lie algebra $\ff_4$ on the Albert algebra, in effect by differentiating the
group generators. But the problem of non-canonical names for automorphisms
and derivations is one that remains 
to be solved at some point.

In order to prepare the ground for our eventual construction of $\ee_8$,
we shall take a concrete `first-principles' approach, and build $\ff_4$ explicitly
from three copies of $\so(9)$, as constructed in Section~\ref{so9}.
We shall not use the Albert algebra structure directly, so we shall not prove directly that
the elements of our Lie algebra are derivations. Instead we shall prove that
our Lie algebra is simple, and compact, and has dimension $52$, so it follows
from the classification
that it is the compact real form of $\ff_4$.

\subsection{The action of $\so(9)$ on the Albert algebra}
We construct $\ff_4$ from the union of three copies of $\so(9)$, each acting on
two of the three coordinates. Thus
we first define the three types of off-diagonal anti-Hermitian matrices as
\begin{eqnarray}
\Za_a=\begin{pmatrix}0&a&0\cr -\overline{a}&0&0\cr 0&0&0\end{pmatrix},&
\Zb_a=\begin{pmatrix}0&0&0\cr0&0&a\cr0&-\overline{a}&0\end{pmatrix},&
\Zc_a=\begin{pmatrix}0&0&-\overline{a}\cr 0&0&0\cr a&0&0\end{pmatrix}.
\end{eqnarray}
The action of such a matrix $A$ on an element $H$ of the Albert algebra
is given by
\begin{eqnarray}
A&:&H\mapsto AH+H\overline{A}^\top=AH-HA
\end{eqnarray}
where $\overline{A}^\top$ denotes the octonion-conjugate transpose matrix,
and $\overline{A}^\top =-A$ and $\overline{H}^\top=H$. 

Writing 
\begin{eqnarray}
H&=&\begin{pmatrix} \rho&\gamma&\overline{\beta}\cr \overline{\gamma} & \sigma & \alpha\cr
\beta & \overline{\alpha} & \tau\end{pmatrix}
\end{eqnarray}
we compute
\begin{eqnarray}
[\Za_a,H]&=&\begin{pmatrix} a\overline{\gamma}+\gamma\overline{a} & a(\sigma-\rho) & a\alpha\cr
\overline{a}(\sigma-\rho) & -\overline{a}\gamma-\overline{\gamma}a & 
-\overline{a}(\overline{\beta})\cr
\overline{\alpha}(\overline{a}) & -\beta a & 0\end{pmatrix}
\end{eqnarray}
Hence
we see that the $\Za_a$ generate a copy of $\so(9)$, acting on the left on
$\begin{pmatrix}\overline{\beta}\cr\alpha\end{pmatrix}$ as the spin representation.
and on the right on $\begin{pmatrix}\beta & \overline{\alpha}\end{pmatrix}$
as the dual copy of the spin representation, and on both sides on
$\begin{pmatrix}(\rho-\sigma)/2&\gamma\cr \overline{\gamma} & (\sigma-\rho)/2\end{pmatrix}$ as the
natural representation. This leaves two copies of the trivial representation,
spanned by $\tau$ and $\begin{pmatrix}(\rho+\sigma)/2&0\cr 0&(\rho+\sigma)/2\end{pmatrix}$.

Thus we obtain the following diagonal matrices as commutators, which can be computed
directly with the matrices, just as in Section~\ref{so9mat}.
\begin{eqnarray}
\Da_p=[\Za_1,\Za_p]/2&=&\begin{pmatrix}p&0&0\cr 0&-p&0\cr 0&0&0\end{pmatrix}\cr
\Db_p=[\Zb_1,\Zb_p]/2&=&\begin{pmatrix}0&0&0\cr 0&p&0\cr 0&0&-p\end{pmatrix}\cr
\Dc_p=[\Zc_1,\Zc_p]/2&=&\begin{pmatrix}-p&0&0\cr 0&0&0\cr 0&0&p\end{pmatrix}\cr
\Da_{p,q}=[\Za_{p},\Za_{q}]/2 &=&\begin{pmatrix}{p\circ q}&0&0\cr 0&{p\circ q}&0\cr 0&0&0\end{pmatrix}\cr
\Db_{p,q}=[\Zb_{p},\Zb_{q}]/2 &=&\begin{pmatrix}0&0&0\cr 0&{p\circ q}&0\cr 0&0&p\circ q\end{pmatrix}\cr
\Dc_{p,q}=[\Zc_{p},\Zc_{q}]/2 &=&\begin{pmatrix}{p\circ q}&0&0\cr 0&0&0\cr 0&0&p\circ q\end{pmatrix}
\end{eqnarray}

We now have three copies of $\so(9)$, generated by 
\begin{enumerate}
\item the $\Da_p$, $\Da_{p,q}$ and $\Za_a$;
\item the $\Db_p$, $\Db_{p,q}$ and $\Zb_a$; and 
\item the $\Dc_p$, $\Dc_{p,q}$ and $\Zc_a$.
\end{enumerate}
All commutators within one of the three copies of $\so(9)$ can be computed by matrix
operations as in Section~\ref{so9mat}
above. Before we compute the rest of the commutators, we investigate
the intersection of the three copies of $\so(9)$. 

\subsection{Triality}
\label{f4triality}
The trace $0$ part of the Albert algebra is spanned by the elements
\begin{eqnarray}
\Wa_0=\begin{pmatrix}1&0&0\cr 0&-1&0\cr 0&0&0\end{pmatrix}&
\qquad&
\Wa_a=\begin{pmatrix}0&a&0\cr \overline{a}&0&0\cr 0&0&0\end{pmatrix}\cr
\Wb_0=\begin{pmatrix}0&0&0\cr0&1&0\cr0&0&-1\end{pmatrix}&
&\Wb_a=\begin{pmatrix}0&0&0\cr 0&0&a\cr 0&\overline{a}&0\end{pmatrix}\cr
\Wc_0=\begin{pmatrix}-1&0&0\cr 0&0&0\cr 0&0&1\end{pmatrix}&
&\Wc_a=\begin{pmatrix}0&0&\overline{a}\cr 0&0&0\cr a&0&0\end{pmatrix}
\end{eqnarray}
We compute the action of the diagonal matrices $\Da_p$, $\Db_p$ and $\Dc_p$ on the Albert
algebra as follows:
\begin{eqnarray}
\Da_p&:& \Wa_a \mapsto \Wa_{pa}+\Wa_{ap}\cr
\Db_p&:& \Wa_a \mapsto -\Wa_{ap}\cr
\Dc_p&:& \Wa_a \mapsto -\Wa_{pa}
\end{eqnarray}
as well as $\Wa_0\mapsto 0$,
and corresponding actions on $\Wb_a$ and $\Wc_a$.

It then follows
that these actions of $\Da_p$, $\Db_p$ and $\Dc_p$ on $\Wa_a$
are given by the negatives of the actions of $B_p$, $R_p$ and $L_p$
on the subscript $a$, in the three cases.
We have already described the triality relations which hold between these three
types of operators on the octonions, and it follows that the same relations hold
between the actions of the $\Da$s, $\Db$s and $\Dc$s. Specifically, 
this induces the following equivalence between matrices:
\begin{eqnarray}
\Db_\ell&\equiv& (-\Da_\ell-\Da_{i,i\ell}-\Da_{j,j\ell}-\Da_{k,k\ell})/2\cr
\Dc_\ell&\equiv& (-\Da_\ell+\Da_{i,i\ell}+\Da_{j,j\ell}+\Da_{k,k\ell})/2
\end{eqnarray}
These formulae 
appear for
example in Wangberg's PhD thesis \cite{Wangberg}.
From them one easily deduces the equivalent formulae for the remaining elements:
\begin{eqnarray}
\Db_{i,i\ell}&\equiv& (\Da_\ell+\Da_{i,i\ell}-\Da_{j,j\ell}-\Da_{k,k\ell})/2\cr
\Dc_{i,i\ell}&\equiv&(-\Da_\ell+\Da_{i,i\ell}-\Da_{j,j\ell}-\Da_{k,k\ell})/2
\end{eqnarray}

These relations imply that in their actions on the Albert algebra, any two of the
three copies of $\so(9)$ intersect in the same $\so(8)$ subalgebra. Hence the dimension
of their sum is $28+8+8+8=52$. We must now prove that this sum is closed
under commutation, in order to complete the proof that it is a Lie algebra.
(Of course, in a sense this is all well-known. The point, however, is to find a proof
which generalises nicely to $\ee_8$, where the results are not already known.)

\subsection{An abstract $\ff_4$ Lie algebra}
In terms of actions, the algebra is spanned by $\Za_a$, $\Zb_a$, $\Zc_a$, and the $\Da_p$
and $\Da_{p,q}$. 
The commutator relations between $\Za$s and $\Da$s  
have all been computed above. Similar relations hold between the $\Zb$s and $\Db$s, and between the
$\Zc$s and the $\Dc$s. Using the computed linear equations 
between the $\Da$s, $\Db$s and $\Dc$s,
we can deduce the commutator relations between the $\Da$s and both the $\Zb$s and $\Zc$s.
It is worth noting the particular cases
\begin{eqnarray}
[\Da_p,\Zb_b]&=&-\Zb_{pb}
\cr
[\Da_p,\Zc_c]&=&-\Zc_{cp}
\end{eqnarray}
This leaves only
the commutators between $\Za$s, $\Zb$s and $\Zc$s to compute.
The computations are non-trivial so we sketch them here.
The following technical lemma will be useful.
\begin{lemma}\label{byc}
If $y\ne b$ and $z\ne c$ are basic octonion units, then
$\overline{b}(yc)=-\overline{y}(bc)$ and $(bz)\overline{c}=-(bc)\overline{z}$.
\end{lemma}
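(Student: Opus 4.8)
The plan is to prove Lemma~\ref{byc} by reducing to the alternative and Moufang identities available in any octonion algebra, rather than by a brute-force case check over the multiplication table. The two claimed identities are mirror images of one another under the octonion conjugation anti-automorphism $x\mapsto\overline x$ (which sends $\overline b(yc)$ to $\overline c\,\overline y b=\overline{(b)}{}\cdots$, more precisely swaps a left-multiplication identity with the corresponding right-multiplication one), so it suffices to establish the first, $\overline b(yc)=-\overline y(bc)$, and then obtain the second by applying the conjugation.

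First I would record the hypotheses in a usable form: $b,y$ are distinct \emph{basic} imaginary units, hence $\overline b=-b$, $\overline y=-y$, $b^2=\pm1$, $y^2=\pm1$, and — crucially — $b\perp y$, so that $by=-yb$. The identity to prove is then equivalent, after clearing the signs from the conjugations, to $b(yc)=y(bc)$ for all octonions $c$, under the standing assumption $by=-yb$. Now I would invoke the linearised (left) alternative law, or equivalently the Moufang-type identity $a(bc)+b(ac)=(ab+ba)c$, which holds in any alternative algebra: setting the pair to be our orthogonal $b$ and $y$ gives $b(yc)+y(bc)=(by+yb)c=0$, i.e.\ $b(yc)=-y(bc)$. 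Then re-inserting $\overline b=-b$ on the far left yields $\overline b(yc)=-\overline y(bc)$, which is exactly the first claim once we also restore $\overline y=-y$ — so in fact I should be slightly careful with the sign bookkeeping: $\overline b(yc)=-b(yc)=y(bc)=-\overline y(bc)$, as required. The second identity follows symmetrically from the right-alternative (or right Moufang) identity $(cb)a+(ca)b=c(ba+ab)$ applied to the orthogonal pair $c$, no — applied with the orthogonal pair being $b$ and $z$: $(cb)z+(cz)b=c(bz+zb)=0$, then $(bz)\overline c=-(bc)\overline z$ after the conjugation sign-chase, using $\overline c=-c$, $\overline z=-z$, $bz=-zb$.

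The only genuine subtlety — and the step I expect to be the main obstacle — is making sure the linearised alternative identity I am quoting is legitimately available for \emph{all} $c\in\OO$, not just basic units, and with the right handedness; octonions are alternative but not associative, so one must use precisely the Moufang/linearised-alternative identities and not, say, the (false) general identity $a(bc)=(ab)c$. Since the paper works throughout with a fixed standard basis and with $p,q,r,s$ mutually orthogonal imaginary units, the cleanest self-contained route, if one wants to avoid citing alternativity, is to verify $b(yc)=-y(bc)$ directly on the basis elements $c$: for $c\in\{1,b,y\}$ it is immediate, and for the remaining basis units one uses the "any three anti-associate" rule quoted in Section~\ref{octo} together with $by=-yb$; this is the routine computation I will not grind through here, but it is a finite and mechanical check made short by the $3$-fold and $7$-fold symmetries. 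Either way the split-octonion case is handled by the paper's standing convention that the same proof works with a few sign changes, since the alternative law holds in $\OO'$ as well.
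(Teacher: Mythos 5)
Your central idea for the first identity---deriving the generic case from the linearised alternative law $a(bc)+b(ac)=(ab+ba)c$---is sound and genuinely different from the paper's argument, which instead splits into the cases where the triple $b,y,c$ associates or anti-associates; for orthogonal \emph{imaginary} $b,y$ your route is arguably cleaner and works for arbitrary $c$. But as written the proposal has two genuine gaps. First, you have silently strengthened the hypothesis: the lemma assumes only that $b,y$ (resp.\ $z,c$) are \emph{distinct basic units}, so one of them may be $1$. In that case your key identity $b(yc)=-y(bc)$ is false (with $y=1$ the linearisation gives $b(yc)+y(bc)=2bc\neq 0$), while the lemma's conclusion still holds for a different reason: only $b$ acquires a conjugation sign, $\overline{b}c=-bc=-\overline{1}(bc)$. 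The paper disposes of this case separately at the outset, and it cannot be dropped, because in the application (computing $[\Zb_b,\Zc_c]$ on the $\Wb_y$ and $\Wc_z$) the subscripts run over all basic units including $1$.

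Second, your treatment of $(bz)\overline{c}=-(bc)\overline{z}$ linearises on the wrong pair. The hypothesis is $z\ne c$; nothing forces $z\ne b$, so the relation $bz=-zb$ you invoke may fail (e.g.\ $b=z$ is allowed), and the identity you actually derive, $(cb)z=-(cz)b$, is not the one needed. The correct move is the right-handed linearisation $(bz)c+(bc)z=b(zc+cz)=0$, using $zc=-cz$ for distinct imaginary $z,c$ (again with the cases $z=1$ or $c=1$ handled separately), after which $(bz)\overline{c}=-(bz)c=(bc)z=-(bc)\overline{z}$; alternatively, carry out the conjugation-mirror of the first identity with the variable renaming made explicit rather than gestured at. With those two repairs---and noting that the intermediate claim ``equivalent to $b(yc)=y(bc)$'' is a sign slip you do correct later---your argument goes through and covers the lemma in the generality the paper actually uses.
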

\begin{proof}
We prove the first equation, the second being essentially the same.
First, if $y=1$, so $b\ne 1$, then 
$$\overline{b}(yc)=\overline{b}c=-bc=-\overline{y}(bc).$$
Similarly, if $b=1$. Now we can assume $b\ne 1\ne y$, and either $b,c,y$ associate,
in which case
$$\overline{b}(yc)=(\overline{b}y)c=-(\overline{y}b)c=-\overline{y}(bc),$$
or they anti-associate, in which case only the signs of the middle two expressions change.
\end{proof} 
\begin{lemma}
In the action on the Albert algebra, the relation
$$[\Zb_b,\Zc_c]=-\Za_{\overline{bc}}$$ holds for any octonions $b,c$.
\end{lemma}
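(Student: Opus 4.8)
The plan is to verify the claimed identity by a direct matrix computation of the commutator $[\Zb_b,\Zc_c]$ acting on a general element $H$ of the Albert algebra, and then to read off the result by comparing with the formula for $[\Za_{a},H]$ obtained earlier. Because both $\Zb_b$ and $\Zc_c$ are genuine octonionic matrices with a single off-diagonal octonion entry, the individual actions $H\mapsto \Zb_b H - H\Zb_b$ and $H\mapsto \Zc_c H - H\Zc_c$ involve no nested-octonion subtleties: each is a bona fide matrix action on the $27$-dimensional real vector space underlying the Albert algebra, so the commutator of the two actions is well-defined and can be computed entry by entry. The first step is therefore to write out $\Zb_b H$, $H\Zb_b$, $\Zc_c H$, $H\Zc_c$ explicitly using the parametrisation of $H$ with entries $\rho,\sigma,\tau,\alpha,\beta,\gamma$, then assemble $\Zc_c(\Zb_b H - H\Zb_b) - (\Zb_b H - H\Zb_b)\Zc_c$ and subtract the same expression with $b,c$ swapped.

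The key simplification is that most of the resulting octonionic expressions are triple products of the form $\overline{b}(yc)$ or $(bz)\overline{c}$ for various entries $y,z$ of $H$, and Lemma~\ref{byc} lets us rewrite each such term so that the factors $b$ and $c$ always appear multiplied together as $bc$ (or $\overline{bc}$), with the $H$-entry pulled out. Once this rewriting is done, the swap $b\leftrightarrow c$ acts cleanly: terms symmetric in the pair collapse, and the surviving antisymmetric part should organise itself precisely into the pattern of $[\Za_{a},H]$ with $a=\overline{bc}$, up to the overall sign. So after the brute-force expansion I would group terms by which entry of $H$ they multiply, apply Lemma~\ref{byc} uniformly, and then match against the displayed formula for $[\Za_a,H]$.

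I expect the main obstacle to be bookkeeping rather than conceptual: keeping track of the octonion conjugations and the order of multiplication in the nine matrix entries, and correctly handling the degenerate subcases in Lemma~\ref{byc} (when $b=1$, $c=1$, $y=1$ or $z=1$, or when $b=y$ or $c=z$, which are excluded from the lemma and must be checked separately, e.g.\ $b=y$ forces $\overline{b}(bc) = N(b)\,c$ by alternativity). One must also confirm that the terms where the lemma does not apply — the diagonal contributions and the cases $y=b$, $z=c$ — still combine to give exactly the diagonal and off-diagonal entries of $-\Za_{\overline{bc}}$ acting on $H$; here associativity of the subalgebra generated by $b$ and $c$ (Artin's theorem) does the work. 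Since the identity is claimed for arbitrary octonions $b,c$, not just basic units, I would do the computation symbolically using only the alternative law, Moufang identities, and Lemma~\ref{byc}, so that no case analysis on the standard basis is needed.
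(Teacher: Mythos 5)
Your plan is, in substance, the paper's own proof: compute the commutator of the two actions directly on the Albert algebra, match the result against the displayed action of $\Za_a$, use Lemma~\ref{byc} to flip the generic triple products $\overline{b}(yc)$ and $(bz)\overline{c}$, and treat the coincidences $y=b$, $z=c$ by alternativity; working with a general $H$ rather than with the basis elements $\Wa_0,\Wa_x,\Wb_y,\Wc_z$ is only a presentational difference. However, one step of your setup would fail as written. The operator $[\Zb_b,\Zc_c]$ acts on $H$ as $[\Zb_b,[\Zc_c,H]]-[\Zc_c,[\Zb_b,H]]$, each inner bracket being an honest octonionic matrix expression; it is the antisymmetrization over \emph{which of the two operators $\Zb_b$, $\Zc_c$ acts first}, not over the labels $b\leftrightarrow c$. ``Subtracting the same expression with $b,c$ swapped'' and keeping ``the antisymmetric part in the pair'' computes a different object and gives the wrong answer: the target $-\Za_{\overline{bc}}$ is not antisymmetric in $b\leftrightarrow c$. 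For instance, with $b=c=i$ the lemma asserts $[\Zb_i,\Zc_i]=\Za_1\neq 0$ (and $[\Zb_1,\Zc_1]=-\Za_1$), whereas any expression antisymmetric in the labels vanishes when $b=c$. So organise the bookkeeping by order of application, with the sign fixed by the paper's convention $[\Zb_b,\Zc_c]=\Zb_b\Zc_c-\Zc_c\Zb_b$ (rightmost factor acting first).

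A smaller repair concerns scope: Lemma~\ref{byc} is stated and proved only for distinct basic units, so a ``fully symbolic'' computation with arbitrary octonions $b,c$ and arbitrary entries of $H$ cannot invoke it uniformly. Either reduce to basic units first — both sides of the claimed identity are real-bilinear in $(b,c)$ and linear in $H$, so nothing is lost, and this is what the paper implicitly does — or replace the lemma by its linearisation: polarising $\overline{b}(bc)=N(b)c$ in $b$ gives $\overline{b}(yc)+\overline{y}(bc)=\bigl(N(b+y)-N(b)-N(y)\bigr)c$, which contains Lemma~\ref{byc} (orthogonal case) and your coincidence case $y=b$ simultaneously, and similarly on the right. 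With these two corrections your computation goes through and reproduces the paper's argument.
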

\begin{proof}
We first compute the action of $\Za_a$ directly, with special cases
\begin{eqnarray}
\Wa_0&\mapsto& -2\Wa_a\cr
\Wa_a&\mapsto& 2\Wa_0\cr
\Wb_0&\mapsto& \Wc_a\cr
\Wc_0&\mapsto& \Wb_a
\end{eqnarray}
and generic cases
\begin{eqnarray}
\Wa_x&\mapsto &0
\cr
\Wb_y&\mapsto & \Wc_{\overline{ay}}
\cr
\Wc_z&\mapsto & -\Wb_{\overline{za}}
\end{eqnarray}
where $a,b,x,y,z$ all denote basic octonion units, with $x\ne a$.

The actions of $\Zb_b$ and $\Zc_c$ are given by the same formulae with 
the letters $U$, $V$, $W$ cycled in the obvious way. Hence we can compute the action
of $[\Zb_b,\Zc_c]=\Zb_b\Zc_c-\Zc_c\Zb_b$. The special cases are straightforward:
\begin{eqnarray}
\Wa_0&\mapsto& 2\Wa_{\overline{bc}}\cr
\Wb_0&\mapsto& -\Wc_{\overline{bc}}\cr
\Wc_0&\mapsto& -\Wb_{\overline{bc}}\cr
\Wa_{\overline{bc}}&\mapsto&2\Wb_0+2\Wc_0=-2\Wa_0
\end{eqnarray}
but the generic cases require some extra work:
\begin{eqnarray}
\Wa_x&\mapsto& 0\cr
\Wb_y&\mapsto& \Wc_{\overline{b}(yc)} -2\delta_{yb}\Wc_c\cr
\Wc_z&\mapsto& -\Wb_{(bz)\overline{c}}+2\delta_{zc}\Wb_b
\end{eqnarray}
To see this, observe first that
in the case $y=b$ the image of $\Wb_y$ simplifies to $-\Wc_c$, and in the case
$z=c$ the image of $\Wc_z$ simplifies to $\Wb_b$.
The remaining cases are dealt with by Lemma~\ref{byc}.
\end{proof}

\subsection{Matrix commutators}
\label{f4lie} 
We have already seen that the Lie bracket in $\so(9)$ agrees with the matrix commutator,
subject to some quite intuitive rules for extending the octonion multiplication to 
incorporate terms of the form $p\circ q$. In addition, it is easy to check that the commutator
rule $[\Za_a,\Zb_b]=-\Zc_{\overline{ab}}$ also corresponds exactly to the natural matrix commutator.
It follows that the Lie bracket can be completely calculated using matrices, provided
that diagonal matrices are always expressed in the appropriate form for the
calculation being performed.

It is interesting, however, to explore to what extent mixed commutators, such as
$[\Da_i,\Db_j]$ or $[\Da_i,\Zb_j]$, can be computed directly, without first converting the $\Da$s,
$\Db$s or $\Dc$s
into the appropriate form. 
Let us consider first the commutators between one diagonal and one off-diagonal element. If we compute
\begin{eqnarray}
[\Db_\ell,\Za_i]&=&[-\Da_\ell-\Da_{i,i\ell}-\Da_{j,j\ell}-\Da_{k,k\ell},\Za_i]/2\cr
&=&-\Za_{i\ell}
\end{eqnarray}
then we get exactly what we would expect by naively computing the commutator of the
matrices:
\begin{eqnarray}
\left[\begin{pmatrix}0&0&0\cr 0&\ell&0\cr0&0&-\ell\end{pmatrix},
\begin{pmatrix}0&i&0\cr i&0&0\cr 0&0&0\end{pmatrix}\right]
&=&
\begin{pmatrix}
0&-i\ell&0\cr \ell i&0&0\cr 0&0&0
\end{pmatrix}
\end{eqnarray}
By symmetry, this shows that most of the commutators of $\Za$s,
$\Zb$s or $\Zc$s with `single-index' $\Da$s, $\Db$s or $\Dc$s
can be computed as matrix commutators.
The remaining cases $[\Db_p,\Za_1]$ and $[\Dc_p,\Za_1]$ are easy, since
there is sufficient associativity in these cases to identify the
action of the commutator matrix with the commutator of the actions.

Next we investigate the commutators of $\Zb$s
and $\Zc$s with `double-index' $\Da$s.
We first calculate with the actions on the Albert algebra, using the Jacobi identity,
which holds automatically for actions.
\begin{eqnarray}
2[\Zb_a,\Da_{p,q}]&=&[\Zb_a,[\Da_p,\Da_q]]\cr
&=&-[\Da_p,[\Da_q,\Zb_a]]-[\Da_q,[\Zb_a,\Da_p]]\cr
&=&[\Da_p,\Zb_{jqa}]-[\Da_q,\Zb_{pa}]\cr
&=&-\Zb_{p(qa)}+\Zb_{q(pa)}\cr
&=&-2\Zb_{p(qa)}
\end{eqnarray}
Now if we try computing matrix commutators we obtain
\begin{eqnarray}
[\Zb_a,\Da_{p,q}]_M&=&\begin{pmatrix}0&0&0\cr 0&0&a\cr 0&-\overline{a}&0\end{pmatrix}
\begin{pmatrix}p\circ q&0&0\cr0&p\circ q&0\cr0&0&0\end{pmatrix}
\cr && \qquad -
\begin{pmatrix}p\circ q&0&0\cr0&p\circ q&0\cr0&0&0\end{pmatrix}
\begin{pmatrix}0&0&0\cr 0&0&a\cr 0&-\overline{a}&0\end{pmatrix}
\cr
&=&\begin{pmatrix}0&0&0\cr0&0&-(p\circ q)a\cr0&-\overline{a}(p\circ q) &0\end{pmatrix}
\end{eqnarray}
Clearly, if these two methods of computation are to give the
same answer, then we require 
\begin{eqnarray}
(p\circ q)a&:=& p(qa)\cr
a(p\circ q)&:=&(ap)q
\end{eqnarray}
which is again a natural convention.
That is, $p\circ q$ is interpreted as a composition of operators,
acting on left or right as appropriate, and with the innermost operator acting first.
Then the above matrix reduces to
\begin{eqnarray}
\begin{pmatrix}0&0&0\cr 0&0&-p(qa)\cr 0&-(\overline{a}p)q&0
\end{pmatrix}&=& \Zb_{-p(qa)}\cr
&=&\Zb_{q(pa)}
\end{eqnarray}
as required.

Similarly, for $[\Zc_a,\Da_{p,q}]$, again
calculating in the Lie algebra we have constructed, and 
using the Jacobi identity, we have
\begin{eqnarray}
2[\Zc_a,\Da_{p,q}]&=&[\Zc_a,[\Da_p,\Da_q]]\cr
&=&-[\Da_p,[\Da_q,\Zc_a]]-[\Da_q,[\Zc_a,\Da_p]]\cr
&=&[\Da_p,\Zc_{aq}]-[\Da_q,\Zc_{ap}]\cr
&=&-\Zc_{(aq)p}+\Zc_{(ap)q}\cr
&=&2\Zc_{(ap)q}
\end{eqnarray}
Then we compare with the naive computation of commutators of matrices, using the
same interpretations of $(p\circ q)a$ and $a(p\circ q)$ as above. We have
\begin{eqnarray}
[\Zc_a,\Da_{p,q}]_M&=&\begin{pmatrix}0&0&-\overline{a}\cr 0&0&0\cr a&0&0\end{pmatrix}
\begin{pmatrix}p\circ q&0&0\cr0&p\circ q&0\cr0&0&0\end{pmatrix}
\cr&&\qquad -
\begin{pmatrix}p\circ q&0&0\cr0&p\circ q&0\cr0&0&0\end{pmatrix}
\begin{pmatrix}0&0&-\overline{a}\cr 0&0&0\cr a&0&0\end{pmatrix}
\cr
&=&\begin{pmatrix}0&0&(p\circ q)\overline{a}\cr0&0&0\cr{a}(p\circ q) &0&0\end{pmatrix}
\end{eqnarray}
and again we get the correct answer. 

We turn now to the computation of commutators of two diagonal elements.
In the Lie algebra, we have
\begin{eqnarray}
[\Db_\ell,\Da_i]&=&-[\Da_\ell+\Da_{i,i\ell}+\Da_{j,j\ell}+\Da_{k,k\ell},\Da_i]/2\cr
&=&\Da_{i,\ell}-\Da_{i\ell}\cr
&=&\Da_{i,\ell}+\Db_{i,\ell}-\Dc_{i,\ell}
\end{eqnarray}
The same computation with matrices gives
\begin{eqnarray}
[\Db_{\ell},\Da_i]_M&=& \begin{pmatrix}0&0&0\cr 0&2i\circ \ell &0\cr 0&0&0\end{pmatrix}\cr
&=&\Da_{i,\ell}+\Db_{i,\ell}-\Dc_{i,\ell}
\end{eqnarray}
By symmetry, therefore, 
the commutators of single-index $\Da$s, $\Db$s and $\Dc$s
can all be calculated correctly by naive matrix commutator calculations.

Unfortunately, the same is not true for double-index elements.
The easiest way to translate these elements between $\Da$, $\Db$ and $\Dc$ is
probably to use the equations
\begin{eqnarray}
\Db_{p,q} - \Db_{pq} &=& \Dc_{p,q} + \Dc_{pq}
\end{eqnarray}
and their images under rotations of $\Da$, $\Db$, $\Dc$.

To summarise the results of this section, we have shown how to compute all commutators
between elements of the basis consisting of $\Da_p$, $\Da_{p,q}$, $\Za_a$, $\Zb_a$, $\Zc_a$,
in such a way that the matrix commutator, suitably interpreted, 
agrees with the commutator of the action on
the Jordan algebra. We have derived certain triality relations between the 
bases of type $\Da$, $\Db$ and $\Dc$ for $\so(8)$,
which describe relations betwen different linear combinations
of basis vectors which act in the same way on the Albert algebra. 
These relations are used in an essential way in the computation of matrix
commutators, and we thereby obtain
an abstract definition of the Lie algebra of
type $\ff_4$, with respect to any one of three triality-related bases. 

More formally:
 
\begin{theorem}\label{f4thm}
Let $\lie$ be the real $52$-space spanned by  the $3\times 3$ octonionic matrices
$\Za_a$, $\Zb_a$, $\Zc_a$ together with either
$\Da_p$, 
$\Da_{p,q}$, 
or $\Db_p$, $\Db_{p,q}$, or 
$\Dc_p$, $\Dc_{p,q}$
defined above,
subject to the relations given above which identify the three $28$-spaces spanned by
the $\Da$s, the $\Db$s and the $\Dc$s.
Define a commutator on $\lie$ as the matrix commutator, subject to 
the
following special rules:
\begin{enumerate}
\item Commutators of two diagonal matrices must be computed in one of the three
languages $\Da$, $\Db$ or $\Dc$, and not mixed;
\item Commutators within each of the three copies of $\so(9)$ are
computed as in Theorem~\ref{so9thm}.
\item Commutators of any $\Za$ with any $\Zb$ or $\Zc$, or of any $\Zb$ with any $\Zc$,
are computed with the usual octonion multiplication.
\item Commutators of any $\Da$ with any $\Zb$ or $\Zc$ (and similarly for $\Db$ with $\Za$ or $\Zc$,
and for $\Dc$ with $\Za$ or $\Zb$) may be computed by using the rules
$(p\circ q)a=p(qa)$ and $a(p\circ q)=(ap)q$.
\end{enumerate}
Then $\lie$ with this commutator map is isomorphic to the compact
real form of the Lie algebra $\ff_4$, if the
octonion division algebra is used, or to the form 
$\ff_{4(4)}$, if the split octonions are used. 
\end{theorem}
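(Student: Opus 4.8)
The plan is to realise $\lie$, equipped with the bracket described in the theorem, as a faithful Lie subalgebra of the Lie algebra of linear operators on the Albert algebra $\mathfrak A$, so that the Jacobi identity is inherited automatically, and then to pin down the isomorphism type by a short structural argument. Concretely, I would package the computations of Sections~\ref{so9}--\ref{f4lie} into a single linear map $\phi\colon\lie\to\mathfrak{gl}(\mathfrak A)$ sending each of the matrices $\Za_a,\Zb_a,\Zc_a,\Da_p,\Da_{p,q}$ to its action on $\mathfrak A$, the double-index diagonal matrices acting (consistently with Section~\ref{natso9}) via $\phi(\Da_{p,q})=\tfrac12[\phi(\Za_p),\phi(\Za_q)]$. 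Two facts then need to be recorded. First, $\phi$ is injective: under $\phi(\Za_1)$ together with the diagonal $\so(8)=\langle\Da_p,\Da_{p,q}\rangle$ the generators fall into the distinct $\so(8)$-isotypic components $8_v,8_s,8_c$ and $\so(8)$, spanned respectively by the $\Za$'s, $\Zb$'s, $\Zc$'s and $\Da$'s, and $\phi$ is nonzero on each. Second, for every pair of basis elements $X,Y$ one has $\phi([X,Y])=[\phi(X),\phi(Y)]$, where the left-hand bracket is the matrix commutator subject to the special rules and the right-hand one is the genuine operator commutator; this is exactly what is checked, case by case, in Theorem~\ref{so9thm}, in Section~\ref{f4triality}, in Lemma~\ref{byc} and the lemmas on $[\Zb_b,\Zc_c]$, and in Section~\ref{f4lie}. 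Granting these, $\phi$ is a linear isomorphism onto its image that intertwines the bilinear operation $[\cdot,\cdot]$ on $\lie$ with an honest Lie bracket, so $[\cdot,\cdot]$ is itself a Lie bracket (in particular antisymmetric and satisfying Jacobi). Thus $\lie$ is a $52$-dimensional Lie algebra, isomorphic to the Lie subalgebra $\phi(\lie)\subseteq\mathfrak{gl}(\mathfrak A)$.

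Next I would prove simplicity. As a module for the diagonal $\so(8)$, the algebra decomposes as $\so(8)\oplus 8_v\oplus 8_s\oplus 8_c$, the three eight-dimensional summands being spanned by the $\Za_a$, the $\Zb_a$ and the $\Zc_a$; this is precisely the content of the triality identifications, which also exhibit the common $\so(8)$ inside the three copies of $\so(9)$. Any nonzero ideal is an $\so(8)$-submodule, hence a sum of some of these four irreducibles; but $[\Za_1,\Za_p]=2\Da_p$ and $[\Za_p,\Za_q]=2\Da_{p,q}$ give $[8_v,8_v]=\so(8)$, the rule $[\Za_a,\Zb_b]=-\Zc_{\overline{ab}}$ gives $[8_v,8_s]=8_c$, and $\so(8)$ acts nontrivially on each summand, so any nonzero ideal is all of $\lie$. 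Hence $\lie$ is simple, and being $52$-dimensional it is a real form of $\ff_4$, since no other simple Lie algebra has dimension $52$.

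Finally, to identify the real form, in the division-algebra case I would exhibit the invariant symmetric bilinear form $\langle H,K\rangle=\mathrm{Re}\,\mathrm{tr}(HK)$ on $\mathfrak A$: it is positive definite exactly because the octonion norm is, and $\phi(\lie)$ acts by skew-symmetric operators for it, since $\mathrm{Re}\,\mathrm{tr}$ is symmetric in its two arguments and cyclic on triples of matrices, so that $H\mapsto XH-HX$ is skew whenever $\overline X^\top=-X$, the diagonal case following by taking commutators. Thus $\phi(\lie)\subseteq\so(\mathfrak A)$, and a simple Lie algebra admitting a faithful representation preserving a positive-definite form is compact; hence $\lie$ is the compact real form of $\ff_4$. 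For split octonions the same form is indefinite, so $\lie$ is a non-compact real form; and since by Theorem~\ref{so9thm} the diagonal subalgebra is now $\so(4,4)$, which is split $D_4$ of real rank $4$ and each of whose summands in $\lie$ is a real representation, a maximal split torus of $\so(4,4)$ stays split in $\lie$, so $\lie$ has real rank $4$. This forces the split form $\ff_{4(4)}$.

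The step I expect to be the main obstacle is the second of the two facts above: verifying the intertwining relation $\phi([X,Y])=[\phi(X),\phi(Y)]$ uniformly across all types of basis pairs, and in particular confirming that the bookkeeping prescribed by the theorem is genuinely consistent --- that evaluating $p\circ q$ as an operator composition, refusing to mix the $\Da/\Db/\Dc$ languages when commuting two diagonal matrices, and converting $\Db$ and $\Dc$ back to $\Da$ via the triality dictionary never produce contradictory values and always match the action on $\mathfrak A$. Most of the individual checks appear in Sections~\ref{so9}--\ref{f4lie}; the labour of the proof is to assemble them and make sure no case is missed, the delicate ones being the double-index diagonal commutators such as $[\Da_{p,q},\Db_{r,s}]$ and the mixed $\Za$--$\Zb$--$\Zc$ brackets whose diagonal output must be fed back through triality before being read off in the $\Da$ basis.
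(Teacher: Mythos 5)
Your proposal is correct and follows essentially the same route as the paper: realise the matrices as operators on the Albert algebra so that Jacobi is automatic, use the triality identifications of the $\Da$, $\Db$, $\Dc$ spans to get dimension $52$, verify case by case that the matrix commutator with the special rules matches the commutator of the actions, and then identify the algebra by simplicity, dimension and compactness via the classification. The only difference is that you spell out the simplicity argument (via the $\so(8)$-module decomposition $\so(8)\oplus 8_v\oplus 8_s\oplus 8_c$), the compactness argument (via the invariant form $\mathrm{Re}\,\mathrm{tr}(HK)$), and the real-rank argument for $\ff_{4(4)}$, steps which the paper asserts rather than details.
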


\section{The Lie algebra $\ee_8=\su(3,\OO\otimes\OO)$
}
\label{e8}
\subsection{Introduction}
The minimal
representation of $\ee_8$ is the adjoint representation of the Lie algebra on itself.
A
concrete approach is to build the algebra out of three copies of $\so(16)$,
each acting on an explicitly defined $248$-dimensional vector space. These actions
then generate an associative subalgebra of the full $248\times 248$ matrix algebra.
 We show that
any two of these copies of $\so(16)$ 
intersect in $\so(8)+\so(8)$, and that they therefore
span a space of dimension $248$ inside the matrix algebra.
Finally we verify explicitly that the commutators of these $248$ linear maps
also lie in this $248$-space. Thus the space is closed under commutation, and
since the Jacobi identity holds automatically, we 
identify the algebra as $\ee_8$.

\subsection{A $248$-dimensional space}
We begin by defining the $248$-dimensional vector space.
Let $\KK$ and $\LL$ be two copies of the octonions, with standard basis elements
$1,i,j,k,\ell,i\ell,j\ell,k\ell\in\KK$ and $1,I,J,K,L,IL,JL,KL\in\LL$,
such that 
$ij=k$, $IJ=K$, and $i,j,\ell$ generate $\KK$ and $I,J,L$ generate $\LL$. 
For basis elements $a$ of $\KK$ and $A$ of $\LL$, define
\begin{eqnarray}
\Za_{aA}&=&\begin{pmatrix}0&a\otimes A&0\cr -\overline{a}\otimes \overline{A}&0&0\cr 0&0&0\end{pmatrix}\cr
\Zb_{aA}&=&\begin{pmatrix}0&0&0\cr 0&0& a\otimes A\cr 0 &-\overline{a}\otimes \overline{A}&0\end{pmatrix}\cr
\Zc_{aA}&=&\begin{pmatrix}0&0& -\overline{a}\otimes \overline{A}\cr
0&0&0\cr a\otimes A&0&0\end{pmatrix}
\end{eqnarray}
Under matrix commutation, as defined in Section~\ref{so16}, the matrices $\Za_{aA}$
generate a copy of $\so(16)$, containing the matrices of type
$\Da_{p}$, $\Da_{p,q}$, $\Da_P$ and $\Da_{P,Q}$. These together with the $\Za_{aA}$ form
a $120$-dimensional vector space, on which $\so(16)$ acts
in its adjoint representation. The $\Zb_{aA}$ and $\Zc_{aA}$ form a $128$-dimensional
space, on which $\so(16)$ acts in its spin representation.
Let $\mathcal V$ to be
the $248$-dimensional vector space spanned by the matrices just mentioned.

Similarly we may define spaces $\mathcal V'$ and $\mathcal V''$ on which the other
two copies of $\so(16)$ act. 
Now restricting the subscripts to lie in $\mathbb K$, we must recover the
construction of $\ff_4$ from three copies of $\so(9)$ intersecting in $\so(8)$.
Therefore we must have the same
relations as before expressing the 
$\Db$ and $\Dc$ elements as linear combinations of the $\Da$ type elements. 
The same applies, of course, to the restriction to $\mathbb L$.
These relations are
enough to identify the three vector spaces $\mathcal V$,
$\mathcal V'$ and $\mathcal V''$,
and hence they allow us to construct actions of all three copies of
$\so(16)$ on the same $248$-space.

\subsection{The space acts on itself}
As a vector space, $\mathcal V$ is the sum of three copies of $\so(16)$, any two of
which intersect in $\so(8)\oplus \so(8)$.
The next step is to show that 
every one of
our generators for $\so(16)$
acts on this $248$-space, as a combination of its action on both
the adjoint and spin representations. The argument is much the same as for
$\so(9)$ in $\ff_4$
The basic idea is to use the action of $3\times 3$ anti-Hermitian matrices on each other via
\begin{eqnarray}
A&:& X \mapsto AX-XA.
\end{eqnarray}
But we need to verify that this really is the same as the action of $\so(16)$ on the
sum of the adjoint and spin representations as described in Section~\ref{so16}. Breaking
up the $3\times 3$ matrices in blocks of sizes $2+1$ we can write the diagonal parts
of both $A$ and $X$ so that they have $0$ in the third place. Then,
writing $A'$ and $X'$ for the top-left $2\times 2$ blocks of $A$ and $X$, we have
\begin{eqnarray}
\begin{pmatrix}A'&0\cr0&0\end{pmatrix} &:&
\begin{pmatrix}X'&v\cr -v^\dagger&0\end{pmatrix} \mapsto
\begin{pmatrix}A'X'-X'A' & A'v\cr v^\dagger A' & 0 \end{pmatrix}
\end{eqnarray}
Now we see the adjoint action given by matrix commutation in the $2\times 2$ block,
and the spinor action $A':v\mapsto A'v$ exactly as described in  
Section~\ref{so16}. The bottom row $v^\dagger A'=-v^\dagger A'^\dagger=-(A'v)^\dagger$
just repeats the spinor action.
It follows that our generators for the three copies of $\so(16)$ generate a subalgebra
of the associative algebra of $248\times248$ matrices. Therefore under commutation
they generate a Lie algebra. 

\subsection{Closure}
It suffices now to show that all the commutators are already
in the $248$-space spanned by the generators.
Within $\so(16)$ we already know that the commutator of the actions on $\mathcal V$
is given by the matrix commutator as described in Section~\ref{so16}. In particular,
all such commutators lie in the $248$-space,
which consists of matrices modulo the relations given above.

The only case not already dealt with inside one of the three copies of
$\so(16)$ is the matrix commutator
\begin{eqnarray}
[\Za_{aA},\Zb_{bB}]_M&=& -\Zc_{\overline{abAB}},
\end{eqnarray}
which we must now prove is equal to the commutator of the actions of $\Za_{aA}$ and
$\Zb_{bB}$.

\begin{lemma}
In the action on $\mathcal V$, we have $[\Za_{aA},\Zb_{bB}]= -\Zc_{\overline{abAB}}$
for all basic units $a,b\in\KK$ and $A,B\in\LL$. 
\end{lemma}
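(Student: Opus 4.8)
The strategy is to reduce this to the analogous $\ff_4$-level computation that was already carried out in the Lemma establishing $[\Zb_b,\Zc_c]=-\Za_{\overline{bc}}$, using the block-diagonal / tensor structure that was set up in Section~\ref{so16}. Concretely, the matrices $\Za_{aA}$, $\Zb_{bB}$, $\Zc_{cC}$ carry entries in $\KK\otimes\LL$, and the key point is that the matrix multiplications needed to compute $[\Za_{aA},\Zb_{bB}]$ only ever multiply the $\KK$-parts with each other and the $\LL$-parts with each other, since $(a\otimes A)(b\otimes B)=(ab)\otimes(AB)$. So the commutator of actions factors through the two separate octonionic computations, each of which is exactly the one done for $\ff_4$.

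\begin{proof}
Recall from Section~\ref{so16} that the matrices $\Za_{aA}$, $\Zb_{bB}$, $\Zc_{cC}$ act on $3$-component columns with entries in $\KK\otimes\LL$, and that in such a column the matrix product $(a\otimes A)v$ is computed by left-multiplying the $\KK$-component of $v$ by $a$ and the $\LL$-component by $A$, independently (this is the meaning of ``evaluating all entries as left-multiplications'' applied to a tensor product). Hence the composite $\Za_{aA}\Zb_{bB}$, acting on a column, is obtained from the composite of the corresponding ``plain'' octonionic matrices $\Za_a\Zb_b$ (over $\KK$) and $\Za_A\Zb_B$ (over $\LL$) by carrying both actions through in parallel on the two tensor factors; the same holds for $\Zb_{bB}\Za_{aA}$. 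Now we already proved, in the $\ff_4$ setting, that $[\Za_a,\Zb_b]=-\Zc_{\overline{ab}}$ as an action on columns (this is the content of the Lemma with the letters cycled appropriately), and likewise $[\Za_A,\Zb_B]=-\Zc_{\overline{AB}}$ over $\LL$. The only subtlety is that $[\Za_{aA},\Zb_{bB}]$ is \emph{not} simply the ``tensor product'' of these two commutators, because $\Za_{aA}\Zb_{bB}$ involves the product $\overline{a}\,b$ in one tensor slot paired with $\overline{A}\,B$ in the other, whereas $\Zb_{bB}\Za_{aA}$ pairs $\overline{b}\,a$ with $\overline{B}\,A$; so one must instead expand the two $3\times 3$ block products directly.

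The cleanest route is therefore to mimic the earlier Lemma's proof verbatim, tracking the extra $\LL$-subscript. Write out the action of $\Za_{aA}$ on the basis of $\mathcal V$ exactly as the action of $\Za_a$ was written out (the special cases $\Wa_0\mapsto -2\Wa_{aA}$ etc., and the generic cases $\Wb_y\mapsto \Wc_{\overline{aA}\,y}$-type, where now the subscripts lie in $\KK\otimes\LL$ and the octonionic conjugation acts componentwise), then compose $\Zb_{bB}$ after $\Za_{aA}$ and subtract. The special cases are immediate. For the generic cases one meets expressions like $\overline{(a\otimes A)}\bigl((y_1\otimes y_2)(c\otimes C)\bigr)$, which by the tensor-product rule equals $\bigl(\overline{a}(y_1 c)\bigr)\otimes\bigl(\overline{A}(y_2 C)\bigr)$; Lemma~\ref{byc} applied separately in each tensor factor rewrites this as $\bigl(\overline{y_1}(ac)\bigr)\otimes\bigl(\overline{y_2}(AC)\bigr)$ up to signs, and the signs combine to give exactly the $-\Zc_{\overline{abAB}}$ pattern, with the $\delta$-terms appearing precisely when $y$ matches $b$ in \emph{both} factors. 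Collecting the cases yields $[\Za_{aA},\Zb_{bB}]=-\Zc_{\overline{abAB}}$ on all of $\mathcal V$, and by Section~\ref{so16} this agrees with the matrix commutator $[\Za_{aA},\Zb_{bB}]_M=-\Zc_{\overline{abAB}}$.
\end{proof}

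\textbf{Main obstacle.} The genuinely delicate point is the bookkeeping of signs in the generic case. In the $\ff_4$ Lemma the anti-association of octonions produced a single sign flip handled by Lemma~\ref{byc}; here each tensor factor contributes such a flip, and one must check that their product is consistent with the overall $\overline{abAB}$ normalization \emph{and} that the $\delta_{yb}$ correction terms — which in the single-octonion case arose from ``$y=b$ simplifies the image'' — now require matching in \emph{both} alphabets simultaneously, so that the correction is $\delta_{yb}\delta_{(\text{the }\LL\text{-part of }y)(\text{the }\LL\text{-part of }b)}$ rather than appearing spuriously when only one factor matches. Verifying that these partial-match terms cancel (because they produce entries of the form $\overline{a}(bc)$ with the conjugation structure of an anti-Hermitian off-diagonal block, which the relevant $\Wc$ slot does not see) is where the ``non-trivial'' label in the $\ff_4$ discussion really bites, and it is the step I would write out in full rather than assert.
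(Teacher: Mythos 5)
There is a genuine gap here, and it comes in two parts. First, you have mis-modelled the representation on which the commutator has to be computed. The lemma is about the action on $\mathcal V$, the $248$-dimensional space spanned by the matrices $\Za_{cC},\Zb_{cC},\Zc_{cC}$ together with the $56$ dimensions of diagonal elements $\Da_p,\Da_{p,q},\Da_P,\Da_{P,Q}$ (modulo the triality relations), with each generator acting by $X\mapsto AX-XA$. There is no action on $3$-component columns over $\KK\otimes\LL$ in this setting (that would be a $192$-dimensional space, which $\ee_8$ does not act on irreducibly), and $\mathcal V$ is not an Albert algebra ``with subscripts in $\KK\otimes\LL$'': the special/generic case list from the $\ff_4$ lemma ($\Wa_0\mapsto-2\Wa_a$, $\Wb_y\mapsto\Wc_{\overline{ay}}$, etc.) has no verbatim analogue, because the diagonal part of $\mathcal V$ consists of nested elements $\Da_{p,q}$, $\Da_{P,Q}$ on which the action of $\Za_{aA}$ and $\Zb_{bB}$ is governed by the special rules $(p\circ q)a=p(qa)$, $a(p\circ q)=(ap)q$ and the triality identifications. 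Verifying $[\Za_{aA},\Zb_{bB}]=-\Zc_{\overline{abAB}}$ on those $56$ basis directions, and on the off-diagonal directions where the two alphabets interact, is a substantially larger case analysis than ``track the extra $\LL$-subscript through the $\ff_4$ computation,'' and your sketch never engages with it. Second, even within the cases you do describe, the decisive step --- the sign bookkeeping and the $\delta$-correction terms, including the partial-match cases where the subscripts agree in one alphabet but not the other --- is exactly the step you defer (``the step I would write out in full rather than assert''), and the cancellation mechanism you gesture at is asserted, not verified. So as written the proof does not establish the lemma.

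For comparison, the paper avoids this entire computation. It first treats the single case $b=B=1$: since one factor of every product is $1$, there are no associativity issues, so the commutator of the actions visibly equals the matrix commutator $[\Za_{aA},\Zb_1]_M=-\Zc_{\overline{aA}}$ (and likewise $[\Za_A,\Zb_b]$). It then writes $\Zb_b=[\Zb_1,\Da_b]$ (up to the established relations) and uses the Jacobi identity --- which holds automatically because these are linear operators on $\mathcal V$ --- together with the already-proved brackets $[\Da_b,\Za_{aA}]=\Za_{abA}+\Za_{baA}$ and $[\Da_b,\Zc_{\overline{aA}}]$, to bootstrap first to general $b$ with $B=1$, then to general $B$, splitting only into the easy cases $b=a$ versus $b\ne a$ and $B=A$ versus $B\ne A$. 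If you want to salvage a direct verification along your lines, you would need to (i) state the action of $\Za_{aA}$ and $\Zb_{bB}$ on every type of basis element of $\mathcal V$, including the nested diagonal ones, and (ii) actually carry out the sign and $\delta$-term analysis in both alphabets; the Jacobi bootstrap is the shortcut that makes all of that unnecessary.
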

\begin{proof}
First consider the case $bB=1$, that is $b=1$ and $B=1$.
If we compute directly the action of
$[\Za_{aA},\Zb_1]$ on any vector in the $248$-space, there are no 
issues with non-associativity, 
and therefore the action is the same as the matrix commutator $[\Za_{aA},\Zb_1]_M
=-\Zc_{\overline{aA}}$. The same applies to the commutators $[\Za_{A},\Zb_{b}]$.

Now the Jacobi identity, which holds automatically for
linear actions on a vector space, allows us to extend this to the general case.
We deduce first that
for $a\ne 1$, $b\ne 1$ and $B=1$
\begin{eqnarray}
[\Za_{aA},\Zb_b] &=& [\Za_{aA},[\Zb_1,\Da_b]]\cr
&=& -[\Zb_1,[\Da_b,\Za_{aA}]]-[\Da_b,[\Za_{aA},\Zb_1]]\cr
&=&-[\Zb_1,\Za_{abA}]-[\Zb_1,\Za_{baA}]+[\Da_b,\Zc_{\overline{aA}}]
\end{eqnarray}
which in the case $b\ne a$ evaluates to
$-\Zc_{\overline{a}b\overline{A}}=-\Zc_{\overline{abA}}$
and in the case $b=a$ evaluates to
$$-2\Zc_{\overline{abA}}-\Zc_{\overline{a}b\overline{A}}=-\Zc_{\overline{abA}}.$$
Similarly
if $B\ne 1$ we have
\begin{eqnarray}
[\Za_{aA},\Zb_{bB}]&=& [\Za_{aA},[\Zb_b,\Da_B]]\cr
&=& -[\Zb_b,[\Da_B,\Za_{aA}]] - [\Da_B,[\Za_{aA},\Zb_b]]\cr
&=& -[\Zb_b,\Za_{aAB}] -[\Zb_b,\Za_{aBA}]- [\Da_B,\Zc_{\overline{abA}}]
\end{eqnarray}
which we again evaluate separately in the cases $B=A$ and $B\ne A$.
This completes the proof.
\end{proof}

\subsection{The main result}
Pulling together all the various strands, we have the following main result.
\begin{theorem}
Let $\KK$ and $\LL$ be two copies of the octonions (either split or compact,
or one of each), with typical elements
$a,p,q\in\KK$ and $
A,P,Q\in\LL$,
such that $a,p,q$ are orthogonal, and $p,q$ are imaginary,
and similarly $A,P,Q$ are orthogonal and $P,Q$ are imaginary.

Define diagonal matrices $\Da_p, \Da_P, \Da_{p,q}, \Da_{P,Q}$
as 
$$\Da_p=\begin{pmatrix}p&0&0\cr 0&-p&0\cr 0&0&0\end{pmatrix}, 
\qquad \Da_{p,q}=\begin{pmatrix}p\circ q&0&0\cr 0&p\circ q&0\cr 0&0&0\end{pmatrix},$$
and similarly for $\LL$.
Define similarly matrices $\Db_p$ and $\Db_{p,q}$ as the images of $\Da_p$
and $\Da_{p,q}$ under rotating
the three coordinates down and to the right, and $\Dc_p$ and $\Dc_{p,q}$
as images under the inverse rotation.

Define off-diagonal matrices with entries in $\KK\otimes\LL$ as 
\begin{eqnarray}
\Za_{aA}&=&\begin{pmatrix}0&a\otimes A&0\cr -\overline{a}\otimes \overline{A}&0&0\cr 0&0&0\end{pmatrix}
\end{eqnarray}
and similarly $\Zb_{aA}$ and $\Zc_{aA}$ by rotating respectively
down-and-right and up-and-left.

Define a real 
vector space $\mathcal V$ 
spanned by all the matrices just defined, by imposing the
linear 
relations given in Section~\ref{f4triality},
that is, if $\KK$ is compact, the relations
\begin{eqnarray}
\Db_\ell&\equiv& -(\Da_\ell+\Da_{i,i\ell}+\Da_{j,j\ell}+\Da_{k,k\ell})/2\cr
\Dc_\ell&\equiv& (-\Da_\ell+\Da_{i,i\ell}+\Da_{j,j\ell}+\Da_{k,k\ell})/2
\end{eqnarray}
together with the images under the $7$-cycle $(i,j,\ell,k,j\ell,-k\ell,i\ell)$,
and \emph{mutatis mutandis} for $\LL$ and for split octonions.
Then $\mathcal V$ has dimension $248$.

Define an antisymmetric product on $\mathcal V$ via
$$[V,W]=VW-WV,$$ where commutators of diagonal matrices
are computed in each copy of $\so(8)$ in any one of the three languages
$\Da,\Db,\Dc$ (but not mixed!), and octonion operations are extended to 
nested octonions via the rules
\begin{eqnarray}
[p\circ q,a]&=&p\circ q\circ a-a\circ p\circ q \quad(=0\mbox{ unless }a=p\mbox{ or }a=q)\cr
(p\circ q)a&=& p(qa)\cr
a(p\circ q)&=&(ap)q.
\end{eqnarray}
Then this product is well-defined, i.e. does not depend on the
representing matrices $V,W$ for elements of $\mathcal V$.
Moreover, this product satisfies the Jacobi identity,
and makes $\mathcal V$ into a copy of the compact real form of the
Lie algebra $\ee_8$ (if both $\KK$ and $\LL$ are division algebras),
or the split real form (if both $\KK$ and $\LL$ are split octonions), or the other
real form $\ee_{8(-24)}$ (if one of each).
\end{theorem}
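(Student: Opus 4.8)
The plan is to assemble the theorem from the pieces already in place, treating $\mathcal V$ as a subspace of the associative algebra of $248\times 248$ real matrices acting on itself. First I would check that $\dim\mathcal V = 248$. The raw count of matrices listed — $64$ of type $\Za$, $64$ of type $\Zb$, $64$ of type $\Zc$, and three $28$-dimensional blocks of diagonal matrices $\Da,\Db,\Dc$ — gives $192 + 84 = 276$; the relations of Section~\ref{f4triality}, applied to the $\KK$-subscripts and independently to the $\LL$-subscripts, identify the three diagonal $28$-spaces pairwise in a common $\so(8)$ for each alphabet, reducing $84$ to $28+8+8 = 52$ in each alphabet's contribution in the obvious bookkeeping, and one arrives at $120 + 128 = 248$. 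This is the same count already carried out for $\so(16)$ in Section~\ref{so16} (adjoint $120$ plus spin $128$), and the triality identification is exactly the one forced by restriction to $\ff_4$, so I would simply quote Section~\ref{f4triality} and Section~\ref{so16} here.

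Next, well-definedness of the product. Since the three copies of $\so(16)$ all act on the same $248$-space $\mathcal V$ (by the block computation of Section~\ref{e8}, the action of a $3\times3$ anti-Hermitian matrix by $X\mapsto AX-XA$ restricts on each $2+1$ block to the adjoint action of $\so(16)$ in the $2\times2$ block together with the spin action on the off-block column), the bracket $[V,W]$ is \emph{defined} as the commutator of linear operators on $\mathcal V$, which is manifestly independent of any choice of representing matrix. What requires checking is that this operator commutator agrees with the ``matrix commutator, suitably interpreted'' using the nested-octonion rules $(p\circ q)a = p(qa)$, $a(p\circ q) = (ap)q$, $[p\circ q,a] = p\circ q\circ a - a\circ p\circ q$. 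Within a single copy of $\so(16)$ this is Theorem~\ref{so16thm}; for commutators of a diagonal matrix with an off-diagonal $\Za,\Zb,\Zc$ the verification is the same as in Section~\ref{f4lie} with the subscripts carrying the extra $\LL$-index passively; and the one genuinely new commutator, $[\Za_{aA},\Zb_{bB}] = -\Zc_{\overline{abAB}}$, is precisely the content of the Lemma just proved. So the bracket is well-defined and equals the interpreted matrix commutator.

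It then follows that $\mathcal V$ is closed under the bracket (the closure Lemma), and, being a space of linear operators closed under commutator, it is a Lie algebra — the Jacobi identity is automatic for operator commutators. To identify it, I would argue: $\mathcal V$ contains two copies of $\ff_4$ (restrict subscripts to $\KK$, resp. to $\LL$, obtaining the algebras of Theorem~\ref{f4thm}), hence is nonabelian and, by a short argument, simple; it has dimension $248$; over $\CC$ the only simple Lie algebra of dimension $248$ is $\ee_8$; so $\mathcal V$ is a real form of $\ee_8$. The real form is pinned down by a signature computation — e.g. the Killing form, or more cheaply the compactness (definiteness) of the bracket when both $\KK,\LL$ are division algebras, which follows because then each generating $\so(16)$ is compact and $\mathcal V$ is a sum of compact subalgebras acting in unitarizable representations — and changing $\KK$ or $\LL$ to split octonions flips the relevant signs, giving $\ee_{8(8)}$ or $\ee_{8(-24)}$ exactly as in Theorems~\ref{so9thm}, \ref{so16thm}, \ref{f4thm}.

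The main obstacle is the identification step, and specifically simplicity: dimension $248$ plus ``contains $\ff_4$'' does not by itself force simplicity, so one must rule out a nontrivial ideal. The cleanest route is to observe that $\mathcal V$ contains a Cartan-type toral subalgebra of rank $8$ (visible inside either $\so(16)$) acting on $\mathcal V$ with the $\ee_8$ root system — all roots of one length, $240$ of them, plus the $8$-dimensional zero weight space — which one reads off from the explicit weight labels $a\wedge A$, $1\wedge p$, etc., already set up in Section~\ref{so16}; an irreducible root system with the algebra spanned by root vectors and toral part gives simplicity immediately, and simultaneously nails the isomorphism type as $\ee_8$ without invoking any classification beyond the root-system level. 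I expect the remaining signature bookkeeping for the three real forms to be routine given the sign conventions established for $\so(9)$ and $\ff_4$.
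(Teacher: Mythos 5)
Your proposal follows the paper's own route almost step for step: the same three copies of $\so(16)$ glued along $\so(8)\oplus\so(8)$ by the triality relations of Section~\ref{f4triality}, the bracket realised as the commutator of the operators on the $248$-space via the $2+1$ block computation (so well-definedness and the Jacobi identity are automatic), agreement with the interpreted matrix commutator via Theorem~\ref{so16thm} and the calculations of Section~\ref{f4lie}, and closure reduced to the single new commutator $[\Za_{aA},\Zb_{bB}]=-\Zc_{\overline{abAB}}$, which is exactly the paper's closing Lemma. Where you genuinely differ is the final identification: the paper is content to invoke generation by the three $\so(16)$'s (equivalently two copies of $\ff_4$), the dimension count $120+128=248$, and the classification, saying little explicitly about simplicity or about which real form arises; your plan to exhibit a rank-$8$ toral subalgebra inside one $\so(16)$ and read off the $240$ roots (the $\so(16)$ roots together with the spinor weights, all of one length) establishes simplicity and the type $E_8$ at the root-system level without appealing to the classification, with the Killing-form signature then separating the three real forms --- a more self-contained finish, at the cost of an explicit weight computation (and note the roots live in the complexification in the compact case, as you implicitly use). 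One caveat on your signature step: ``a sum of compact subalgebras in unitarizable representations'' does not by itself give compactness (split algebras are also generated by compact subalgebras); the clean argument in the division-algebra case is that the algebra preserves the natural positive-definite norm on the $248$-space, hence embeds in $\so(248)$ and is compact. Finally, your raw dimension bookkeeping is garbled though harmless: each of the three diagonal blocks $\Da$, $\Db$, $\Dc$ is $56$-dimensional (it contains both the $\KK$-part and the $\LL$-part), so the raw count is $192+168=360$, collapsing under the triality identifications to $192+56=248$; the figures ``$192+84=276$'' and ``reducing $84$ to $28+8+8=52$'' are not right, though the decomposition $120+128=248$ you quote from Section~\ref{so16} is the correct one.
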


\end{document}